\newtheorem{thm}{Theorem}[section]
\newtheorem{cor}[thm]{Corollary}
\newtheorem{lem}[thm]{Lemma}
\newtheorem{prop}[thm]{Proposition}
\theoremstyle{definition}
\newtheorem{defi}[thm]{Definition}
\newtheorem{rem}[thm]{Remark}
\newtheorem{exam}[thm]{Example}
\newcommand{\R}{\mathbb{R}}
\newcommand{\N}{\mathbb{N}}
\tikzset{
  LabelStyle/.style = {rectangle, rounded corners, draw,
                        minimum width = 2em, %fill = yellow!50,
                        text = black, font = \bfseries },
  VertexStyle/.style = {circle, draw, 
                                font =  \large\bfseries},
  EdgeStyle/.style = {->, bend left} }
\definecolor{rot}{rgb}{1.000,0.000,0.000}
\definecolor{gruen}{rgb}{0.000,1.000,0.000}
\newcommand{\one}{1\!\! 1}
\begin{document}
\title{Combinatorial considerations on \\the invariant measure of \\a stochastic matrix  \thanks{Acknowledgement:
This research has been supported by Deutsche Forschungsgemeinschaft (DFG) through grant CRC 1114 ''Scaling Cascades in Complex Systems'', Project C05 ''Effective models for materials and interfaces with multiple scales''. }}
\author{Artur Stephan\\
\vspace{-0.3cm}
\small{Weierstrass Institute for Applied Analysis and Stochastics}\\
\small{\texttt{ artur.stephan@wias-berlin.de}}}

\maketitle

\begin{abstract}
The invariant measure is a fundamental object in the theory of Markov processes. In finite dimensions a Markov process is defined by transition rates of the corresponding stochastic matrix. The Markov tree theorem provides an explicit representation of the invariant measure of a stochastic matrix. In this note, we given a simple and purely combinatorial proof of the Markov tree theorem. In the symmetric case of detailed balance, the statement and the proof simplifies even more.
\end{abstract}
\textbf {Keywords:} Markov chain, Markov process, invariant measure, stationary measure, stationary distribution, Theorem of Frobenius-Perron, Kirchhoff tree theorem, Markov tree theorem, directed and undirected acyclic graphs, spanning trees, detailed balance.

~

\textbf {MSC:} 
60Jxx.

%%%%%%%%%%%%%%%%%%%%%%%%%%%%%%%%%%%%%%%%%%%%%%%%%%%%%%%%%%%%%%%%%%%

\section{A stochastic matrix and its invariant measure}

We consider a finite state space $Z:=\{1, \dots, N\}$ where the number of species $N\in\mathbb N$ is fixed. A stochastic matrix $M=(m_{ij})_{i,j=1, \dots N}$ (also called Markov operator) on $\R^N$ is a real matrix with non-negative entries and which satisfies $M \one =\one$, where $\one := (1,\dots, 1)^T$. This condition is equivalent to the fact that its adjoint $M^*$ maps the set of probability vectors, i.e. non-negative vectors $v\in\R^N$ with $\sum_{j=1}^N v_j=1$, to itself. See \cite{KemenySnell, Norris} for introductive reading on Markov Chains.

It is well-known that there is always a probability vector $w$ such that $M^*w=w$ or equivalently $w^T M = w^T$. The famous Theorem of Frobenius-Perron states that the eigenvector is positive if $A$ is irreducible. 
\begin{thm}[Perron (1907) - \cite{Perron}, Frobenius (1912) - \cite{Frobenius}]
Let $A\in\R^{N\times N}\geq 0$ be an irreducible matrix with spectral radius $\rho(A)$. Then $\rho(A)$ is a simple eigenvalue of the matrix $A$, the corresponding eigenspace is one-dimensional and there is a positive eigenvector.
\end{thm}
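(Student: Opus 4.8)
The plan is to prove the four assertions in the statement — existence of a nonnegative eigenvector, its strict positivity, identification of the corresponding eigenvalue with $\rho(A)$, and simplicity — in that order, channelling irreducibility throughout the single structural fact that $(I+A)^{N-1}$ has strictly positive entries. First I would establish existence by a fixed-point argument. On the standard simplex $\Delta = \{x\in\R^N : x\geq 0,\ \one^T x = 1\}$, which is compact and convex, define the map $f(x) = Ax/(\one^T A x)$. It is well defined and continuous because irreducibility forces $A$ to have no zero column (for $N\geq 2$), so the column sums are positive and $\one^T A x > 0$ on all of $\Delta$; Brouwer's fixed-point theorem then yields $x^*\in\Delta$ with $Ax^* = \lambda x^*$ and $\lambda = \one^T A x^* > 0$. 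Strict positivity of $x^*$ follows immediately: applying $(I+A)^{N-1}$ to this relation gives $(1+\lambda)^{N-1} x^* = (I+A)^{N-1} x^*$, and since $(I+A)^{N-1}$ is entrywise positive while $x^*\geq 0$, $x^*\neq 0$, the right-hand side is strictly positive, hence so is $x^*$.

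Next I would identify $\lambda$ with the spectral radius. Running the same construction on $A^T$, which is again nonnegative and irreducible, produces a strictly positive left eigenvector $w>0$ with $w^T A = \mu\, w^T$; evaluating $w^T A x^*$ in two ways forces $\mu = \lambda$ because $w^T x^* > 0$. For an arbitrary eigenpair $Ay = \nu y$ (possibly complex), nonnegativity of $A$ gives the entrywise bound $A|y| \geq |\nu|\,|y|$; multiplying on the left by $w^T>0$ yields $\lambda\, w^T|y| \geq |\nu|\, w^T|y|$, and dividing by $w^T|y|>0$ gives $|\nu|\leq\lambda$. Since $\lambda$ is itself an eigenvalue, this shows $\lambda = \rho(A)$.

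I would then prove that the eigenspace is one-dimensional. If $Az=\lambda z$ with $z$ real and not proportional to $x^*$, then for a suitably chosen scalar $t$ the vector $x^* - t z$ is a nonnegative, nonzero eigenvector with at least one vanishing coordinate, contradicting the strict positivity established above; the complex case reduces to the real one by splitting $z$ into real and imaginary parts, each an eigenvector for the real eigenvalue $\lambda$. Finally, for algebraic simplicity I would exclude a Jordan block of size $\geq 2$: a generalized eigenvector $v$ with $(A-\lambda I)v = c\,x^*$ and $c\neq 0$ would satisfy $0 = w^T(A-\lambda I)v = c\, w^T x^* \neq 0$, a contradiction, so geometric and algebraic multiplicity both equal one.

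The main obstacle is not any single calculation but routing irreducibility correctly: the strictly positive left eigenvector $w$ is the one tool that powers both the spectral-radius comparison and the algebraic-simplicity argument, and obtaining it requires applying the existence and positivity steps to the transpose $A^T$ before the rest of the argument can proceed. I expect the step most likely to hide a subtlety to be the passage from $|\nu|\leq\lambda$ and the Jordan-block exclusion to the full claim of \emph{algebraic} simplicity, where one must keep geometric multiplicity one and the absence of higher rank generalized eigenvectors clearly separated.
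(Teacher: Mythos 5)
Your proof is correct, but it necessarily takes a different route from the paper, because the paper does not prove this theorem at all: it is quoted as the classical result of Perron and Frobenius, and the paper's logic runs in the opposite direction --- the Markov tree theorem (Theorem \ref{MainTheorem}) is proved combinatorially, and the Frobenius--Perron statement for stochastic matrices is then recovered as an immediate consequence, with the explicit tree formula (\ref{StationMeasure}) supplying the eigenvector and the connectivity analysis of Sections \ref{SectionStochMatrixAndReactionGraph} and \ref{SectionPositivity} (Proposition \ref{PropCharacterizazionPositivity}) supplying positivity and uniqueness. Your argument is the standard direct proof: Brouwer's fixed-point theorem on the simplex for existence, the lemma $(I+A)^{N-1}>0$ for strict positivity, a positive left eigenvector of $A^T$ for the bound $|\nu|\leq\lambda$ (hence $\lambda=\rho(A)$), the crossing argument with $x^*-tz$ for geometric simplicity, and the pairing against $w$ to exclude a Jordan block of size two. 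All five steps are sound; two small points deserve attention. First, the one ingredient you use without proof is precisely the structural fact $(I+A)^{N-1}>0$, which needs its own short justification: irreducibility makes the digraph of $A$ strongly connected, so any ordered pair of distinct vertices is joined by a path of length at most $N-1$, and the corresponding term in the binomial expansion of $(I+A)^{N-1}$ is positive, while the diagonal is handled by the identity term. Second, in the crossing argument you should explicitly dispose of the case $x^*-tz=0$, which is excluded by the assumption that $z$ is not proportional to $x^*$. Comparing what the two approaches buy: yours treats an arbitrary irreducible nonnegative matrix, identifies the eigenvalue with the spectral radius, and establishes algebraic (not merely geometric) simplicity, none of which the paper's combinatorial route addresses, since the paper works with stochastic matrices, the eigenvalue $1$, and uniqueness of the invariant measure only; the paper's route, in exchange, produces the eigenvector as an explicit polynomial in the matrix entries, indexed by rooted spanning trees, which is the entire point of the note and which no fixed-point argument can deliver.
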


The normalized vector $w$ satisfying $w^T M = w^T$ is called the invariant measure of the stochastic matrix. The invariant measure is of great importance for stochastic processes. For a given Markov operator $M$ and initial state $p_0$, the sequence $p_n=M^{*n}p_0$ is called Markov chain and $p_\infty :=\lim_{n\rightarrow \infty} p_n$ is an invariant measure  of $M^*$. Moreover, invariant measures are also stationary measures, i.e. for $p_0 = w$ the chain is constant.

A Markov process (sometimes also called continuous time Markov chain) is given by a family $T(t) = \mathrm{e}^{tA}$ of Markov operators. The Theorem of Kakutani-Markov provides the existence of an invariant measure $w$ such that $w^TT(t)=w^T$ for any $t\geq 0$. This is equivalent to $A^*w=0$ or $w^TA=0$, where $A= T'(0)$ is the generator of the semigroup, a Markov generator. Hence, it is an element of the null space of the generator of $A$. If $M$ is a stochastic matrix (or Markov operator) then $A=M-I$ is a Markov generator. Conversely, if $A$ is a bounded Markov generator then there is a positive number $\alpha>0$ such that $M=\alpha A + I$ is a stochastic matrix. That means both of these problems, finding the null space of a Markov generator and finding the invariant measure of a Markov operator can be solved equally. But note the set of stochastic matrices is larger than the set of operators represented by $\mathrm{e}^{tA}$ with some $t$ and a Markov generator $A$. For example, there is no $t\geq0$ and Markov generator $A$ with $\mathrm{e}^{tA}=\begin{pmatrix} 0&1\\1&0 \end{pmatrix}$.

The Theorem of Frobenius-Perron is a pure existence result. An explicit formula for $w\in\R^N$
is provided by the so-called \textit{Markov tree theorem} (see Section \ref{SectionMarkovTreeTheorem} for the exact statement). The Markov Tree Theorem has the Theorem of Frobenius and Perron as an immediate consequence. 

There are many different proofs for the Markov tree theorem: algebraic proofs (like in \cite{ KruckmanGreenwaldWicks}) which compute determinants and minors and are similar to Kichhoff's proof of the the Kirchhoff's Matrix Tree Theorem \cite{Kirchhoff} (see e.g. \cite{Bollobas} for a smooth version of Kirchhoff's Matrix Tree Theorem), and stochastic proofs \cite{AnantharamTsoucas} which define a Markov process on the set of trees and investigate its time reversal. The aim of this note is to give an easy proof which is purely combinatorial. Moreover, a similar reasoning can be used for determining the invariant measure of a symmetric stochastic process, i.e. where the corresponding stochastic matrix is detailed balanced (see Section \ref{SectionDetailedBalance}).

\section{A stochastic matrix and the corresponding reaction graph}\label{SectionStochMatrixAndReactionGraph}

The entries $m_{ij}$ of a stochastic matrix correspond to transition probabilities from the state $i$ to $j$. It is convenient to illustrate the action of a stochastic matrix with a reaction network or graph. So let us recall some graph theory (see e.g. \cite{Bollobas} for further references). A graph $\gamma=(V,E)$ consists of vertices $v\in V(\gamma)$ and edges $e\in E(\gamma)$. We have finitely many vertices that are labelled with $i\in Z=\{1, \dots, N\}$. The edge $e$ going from $i$ to $j$ is often just denoted by $e_{ij}$. The transition probability $m_{ij}$ correspond to the edge $e_{ij}$. If $m_{ij}=0$, there is no edge in the graph. Clearly, we deal with directed graphs, i.e. with graphs where edges $e_{ij}$ and $e_{ji}$ can be distinguished. In Section \ref{SectionDetailedBalance} we deal also with undirected graphs where the edges $e_{ij}$ and $e_{ji}$ are not distinguished.

A (directed) path in a graph $\gamma$ is a subset of vertices $i_1, \dots, i_m$ such that $e_{i_1i_2}, \dots, e_{i_{m-1}i_m}\in E(\gamma)$. 
Two states $i$ and $j$ communicate if there is a directed path from $i$ to $j$ and a directed path from $j$ to $i$. Clearly, this defines an equivalent relation on the state space $Z$ and hence, the state space $Z$ decomposes into disjoint equivalent classes $C_1, \dots, C_m$ of states which communicate.

It can happen that some of the classes are totally disconnected to other classes, i.e. there is no path in any direction. It can also happen that some classes are connected in the sense that there is a connection only in one direction, i.e. a connection from one class to another but certainly not back. This is sometimes called \textit{weakly connected}. Let $Z_1$ be the union of all classes $C_k$ that are totally disconnected; $Z_2$ the union of all classes $C_k$ such that there are only paths ending in $C_k$ and not starting out of $C_k$; $Z_R$ the union of all remaining classes. So $Z=Z_1\cup Z_2 \cup Z_R$ with (maybe after renumbering) $Z_1 = C_1\cup\dots \cup C_k$, $Z_2 = C_{k+1}\cup\dots \cup C_{k+l}$ and $Z_R = C_{k+l+1}\cup\dots \cup C_m$. 

With this definition we get  the following general form of (the adjoint of) a stochastic matrix
\begin{align*}
M^* = 
\small{\left(
\begin{array}{ccc|ccc|cccc}
\boxed{M_1^*} & 0 & 0 & 0 & \dots & 0 & 0 & 0 & 0 & 0\\
0 & \ddots & 0 & 0 & \dots & 0 & 0 & 0 & 0 & 0\\
0 & 0 & \boxed{M_k^*} & 0 & \dots & 0 & 0 & 0 & 0 & 0\\
\hline
0 & 0 & 0 & \boxed{M_{k+1}^*} &  \dots & 0 & \boxed{X} & \boxed{X} & \boxed{X} & \boxed{X}\\
0 & 0 & 0 & 0 &  \ddots & 0 & \boxed{X} & \boxed{X} & \boxed{X} & \boxed{X}\\
0 & 0 & 0 & 0 &  \dots & \boxed{M_{k+l}^*}& \boxed{X} & \boxed{X} & \boxed{X} & \boxed{X}\\
\hline
0 & 0 & 0 & 0 & 0 & 0 & \boxed{M_{k+l+1}^*} & \boxed{X} &  \dots & \boxed{X} \\
0 & 0 & 0 & 0 & 0 & 0 & 0 & \boxed{M_{k+l+2}^*} &  \dots & \boxed{X} \\
0 & 0 & 0 & 0 & 0 & 0 & 0 & 0 &  \ddots  & \boxed{X}  \\
0 & 0 & 0 & 0 & 0 & 0 & 0 & 0 &  \dots & \boxed{M_{m}^*}
\end{array}
\right)}
\end{align*}
Here the boxed entries stand for matrices and $\boxed X$ stand for (maybe different) non-zero matrices which describe the transitions between communicating classes.

The matrices $M_j$ for $j=1,\dots, k+l$ are stochastic matrices now acting on the equivalent class $C_j$. Each of them has an invariant measure by the Theorem of Frobenius-Perron. By definition, in $C_j$ all states are communicating and the stochastic matrix $M_j$ is irreducible. Hence, it has a unique invariant measure $\mu_j$ which is positive, i.e. $M^*_j \mu_j = \mu_j$. By $\widetilde\mu_j$ we denote the trivial continuation of $\mu_j$ in $\R^N$ with zeros. Obviously, it is also an invariant measure of $M$. 

\begin{prop}
 Any invariant measure of $M$ is given by a convex combination of $\widetilde \mu_j$
 \begin{align*}
  \widetilde \mu = \sum_{j=1}^{l+k}\lambda_j \widetilde \mu_j, ~~~\lambda_j\geq 0,~~\sum_{j=1}^{l+k}\lambda_j =1.
 \end{align*}
 In particular, the entries of $\widetilde  \mu$ with index larger than $k+l$ are zero. 
\end{prop}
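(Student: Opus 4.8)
The plan is to exploit the displayed block structure of $M^*$. Writing the candidate invariant measure $w$ in block form $w=(w_1,\dots,w_m)$ according to the communicating classes $C_1,\dots,C_m$, the eigenvalue equation $M^*w=w$ splits along the three groups $Z_1$, $Z_2$, $Z_R$. The overall strategy is to show that the components of $w$ on the transient part $Z_R$ must vanish; once this is known, the remaining equations decouple into the individual irreducible blocks and are pinned down by the Theorem of Frobenius--Perron.

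First I would treat the totally disconnected classes. For $j=1,\dots,k$ the corresponding block row of $M^*$ contains only the diagonal block $M_j^*$, so the equation reads $M_j^*w_j=w_j$. Since $C_j$ is a communicating class, $M_j$ is irreducible, and since $C_j$ is closed, $M_j$ is genuinely stochastic; hence $1$ is its Perron eigenvalue with one-dimensional eigenspace, and $w_j=\lambda_j\mu_j$ for a scalar $\lambda_j$. The same conclusion will follow for the classes $C_{k+1},\dots,C_{k+l}$ in $Z_2$ once we know $w_R=0$, because their block rows read $M_j^*w_j+(\text{coupling})\,w_R=w_j$ and the coupling term then drops out.

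The hard part will be showing $w_R=0$. The key observation is that each transient block $M_j$ with $j>k+l$ is substochastic with at least one strictly deficient row: since $C_j\subset Z_R$ is transient, some state of $C_j$ has positive transition probability into another class, so the corresponding row sum of $M_j$ is $<1$. Together with irreducibility of $M_j$ this forces $\rho(M_j)<1$. I would prove this via the positive left Perron eigenvector $y>0$ of $M_j$: testing $\rho(M_j)\,y=yM_j$ against $\one$ gives
$$ \rho(M_j)\sum_i y_i=\sum_i y_i r_i<\sum_i y_i, $$
where $r_i\le 1$ denotes the $i$-th row sum of $M_j$ and the strict inequality uses $y>0$ together with at least one $r_i<1$. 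Hence $1$ is not an eigenvalue of $M_j^*$. Because the $Z_R$-part $U_R$ of $M^*$ (the bottom-right block) is block upper triangular with these blocks on its diagonal, its spectrum is the union of the $\sigma(M_j^*)$, so $1$ is not an eigenvalue of $U_R$ either; equivalently $U_R-I$ is invertible, and the bottom block equation $U_R w_R=w_R$ yields $w_R=0$.

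Finally I would assemble the pieces. With $w_R=0$ the $Z_1$- and $Z_2$-equations give $w=\sum_{j=1}^{k+l}\lambda_j\widetilde\mu_j$. Since the $\widetilde\mu_j$ have pairwise disjoint supports on distinct classes and each is a probability vector, nonnegativity of $w$ forces $\lambda_j\ge 0$, while the normalization $\sum_i w_i=1$ yields $\sum_{j=1}^{k+l}\lambda_j=1$. In particular every component of $w$ with index beyond $k+l$ vanishes, which is exactly the asserted claim.
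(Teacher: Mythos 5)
Your proposal is correct, and it establishes the key fact $w_R=0$ by a genuinely different route than the paper. The paper argues by contradiction on the last block: assuming $\eta_m\neq 0$, it extracts the relations $X\eta_m=0$ and $M_m^*\eta_m=\eta_m$ from the fixed-point equation (the first of these rests on non-negativity of the invariant measure together with the fact that the columns of $M^*$ sum to one), observes that $\rho(M_m^*)\le 1$ so that a non-zero fixed vector forces $\rho(M_m^*)=1$, invokes Frobenius--Perron to conclude $\eta_m>0$, and derives a contradiction with $X\eta_m=0$ since $X\neq 0$ is non-negative; the remaining transient blocks are then removed one at a time, iterating backwards. You instead prove the sharper statement that every transient diagonal block satisfies $\rho(M_j)<1$, by testing the positive left Perron eigenvector against $\one$ and exploiting the strictly deficient row sum that an exit edge creates, and you then eliminate all of $w_R$ in one stroke because the $Z_R$-part of $M^*$ is block upper triangular, so $1$ is not in its spectrum and $U_R-I$ is invertible. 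Your route buys two things: it nowhere uses non-negativity of $w$ in this step, so it in fact shows that \emph{any} vector fixed by $M^*$ (signed or complex) vanishes on $Z_R$, and it replaces the backward iteration by a single spectral observation; the paper's route gets by with the cruder bound $\rho(M_m^*)\le 1$, never needing strict inequality, and its identity $X\eta_m=0$ is the probabilistically transparent statement that invariant mass cannot sit in a class from which mass leaks out. Two hypotheses you rely on deserve an explicit line each: that every class in $Z_R$ has a state with positive transition probability out of the class (this is exactly how $Z_R$ is defined, via paths starting out of it), and that simplicity of the Perron root of $M_j$ transfers to $M_j^*$, which is what justifies writing $w_j=\lambda_j\mu_j$; both are immediate, but they carry the weight of the argument.
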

\begin{proof}
 As above, let $M$ be given in $m\times m$ blocks.  Since $M^*_j\mu_j = \mu_j$, it also holds $M^*\widetilde \mu = \widetilde \mu$. Hence $\widetilde \mu$ defined by the above representation is indeed an invariant measure of $M$. Now, let $\eta = (\eta_1, \dots, \eta_m)^T$ be an arbitrary invariant measure of $M$.
 By the above considerations, the first $k+l$ components of $\eta$ are uniquely determined by the irreducible components $M_j^*$ by the Theorem of Frobenius-Perron. Hence, it suffices that the entries with index larger than $k+l$ are zero.
 Let us look at $\eta_m$ and assume that $\eta_m\neq 0$. We write $M^*=\begin{pmatrix} \widetilde M^*_1 & X \\ 0 &  M^*_m                       \end{pmatrix}$, where $\tilde\eta_1$ is an invariant measure of $\widetilde M_1^*$. Hence, we have
 \begin{align*}
  X \eta_m = 0, ~~ M^*_m \eta_m = \eta_m,
 \end{align*}
 where $M^*_m$ is irreducible and non-negative and $X$ is non-zero and non-negative. Since the sums of the columns in $M_m^*$ are less or equal to 1, the corresponding matrix norm is less or equal to 1. Hence, also the spectral radius of $\rho(M_m^*)$ is less or equal to 1. Since $\eta_m$ is a eigenvector to the eigenvalue 1 (it is $\neq 0$), we have $\rho(M_m^*)=1$. By the Theorem of Frobnius-Perron, we conclude that $\eta_m>0$. But this contradicts $X\eta_m =0$, since $X\neq 0$. That shows, $\eta_m =0$.
 As above, we can show iteratively that also $\eta_j=0$ for $j=k+l+1, \dots, m$. This proves the claim. 
\end{proof}

Summarizing, we showed that the invariant measure of a stochastic matrix is totally determined by the invariant measure of its irreducible components. Moreover, in each irreducible component the invariant measure is unique. The next aim is to get an explicit formula for that unique invariant measure.

%\begin{thm}[Frobenius-Perron]
%Let $M\in\R^{N\times N}$ be a non-negative irreducible matrix with spectral radius $\rho(A) =r$. Then $r$ is a simple eigenvalue of the matrix $A$, the corresponding eigenspace is one-dimensional and there is a positive eigenvector.
%\end{thm}

\section{Rooted trees}\label{SectionRootedTrees}
In the whole section, we fix an irreducible component $C_j$, $j=1,\dots, k+l$ with the stochastic matrix $M_j$. We denote it simply with $C$, the stochastic matrix by $M=(m_{ij})$, the induced graph of the stochastic matrix is denoted by $\gamma_0$. Let us say that the number of states in $C$ is $n\in \N$.

By definition, a \textit{directed loop} is a closed directed path, i.e. a subset of edges $\{e_{i_1i_2}, \dots, e_{i_{m}i_1}\}\subset E(\gamma)$. The graph is called \textit{acyclic} if it does not contain any directed loop. In the following we consider a special subsets of acyclic graphs. We define a \textit{tree} as a connected acyclic subgraph. A special and important type of trees are the \textit{directed rooted trees}.
\begin{defi}
Fix a state $j\in C$. We define $\Gamma_j$ as the set of all directed trees rooted at $j\in C$, i.e. all graphs $\gamma$ with the following two properties:
\begin{itemize}
 \item[a)] $\gamma$ is a directed acyclic graph with $n$ vertices.
 \item[b)] Each vertex $\bar j \in C\setminus \{j\}$ has exactly one outgoing edge and $j$ has no outgoing edge.
\end{itemize}
\end{defi}
The edges in a rooted directed tree are oriented towards the root.
Note, each $\gamma\in \Gamma_j$ is a subgraph of the complete directed graph spanned by $n$ vertices, but not necessarily a subgraph of $\gamma_0$ (which is defined by the stochastic matrix $M$). A graph $\gamma\in \Gamma_j$ has $n-1$ edges in total but no edge that starts from $j$. On the other hand the graph necessarily contains an edge that ends at $j$. Otherwise, we would have a loop spanned by all other vertices what is not possible since $\gamma$ is acyclic.
\begin{exam} 
Let us fix $j=1$ and we want to look at graphs in $\Gamma_1$. There are many graphs $\gamma\in\Gamma_1$ but some of them are similar in the sense that they only differ in the permutation of states $j\leftrightarrow k$ for some $j\neq 1$ and $k\neq 1$. We call graphs that are not similar \textit{topologically different} and do not specify the vertices in the graph. The number of such different configurations is stated in the box.\\
$n=3$:\\
\begin{center}
\begin{minipage}[t]{12cm}
\unitlength=1.6cm
\begin{picture}(2.3, 1.1)
\linethickness{0.2mm}

\put(0.1,0.0){\circle*{0.1}} 
\put(1.1,0.0){\circle*{0.1}} 
\put(1.1,1.0){\circle*{0.1}} 

\put(0.1,0.0){\vector(1,1){0.9}} 
\put(1.1,0.0){\vector(0,1){0.9}} 

\put(1.1, 1.3){\makebox(0.0,0.0){1}}

\put(0.4,0.8){\makebox(0.0,0.0){$\boxed{1}$}}
\end{picture}\hspace{3cm}
\begin{picture}(2.3, 1.5)
\linethickness{0.2mm}

\put(0.1,0.0){\circle*{0.1}} 
\put(1.1,0.0){\circle*{0.1}} 
\put(1.1,1.0){\circle*{0.1}} 

\put(0.1,0.0){\vector(1,1){0.9}} 
\put(1.1,0.0){\vector(-1,0){0.9}}

\put(1.1, 1.3){\makebox(0.0,0.0){1}}
\put(0.3,0.8){\makebox(0.0,0.0){$\boxed{2}$}}
\end{picture}
\vspace{1cm}
\end{minipage}
\end{center}
$n=4$:\\

\begin{minipage}[t]{12cm}
\unitlength=1.4cm
\begin{picture}(2.3, 1.1)
\linethickness{0.2mm}

\put(0.1,0.0){\circle*{0.1}} 
\put(1.1,0.0){\circle*{0.1}} 
\put(2.1,0.0){\circle*{0.1}} 
\put(1.1,1.0){\circle*{0.1}} 

\put(0.1,0.0){\vector(1,1){0.9}} 
\put(1.1,0.0){\vector(0,1){0.9}} 
\put(2.1,0.0){\vector(-1,1){0.9}} 

\put(1.1, 1.3){\makebox(0.0,0.0){1}}

\put(0.4,0.8){\makebox(0.0,0.0){$\boxed{1}$}}
\end{picture}~
\begin{picture}(2.3, 1.1)
\linethickness{0.2mm}

\put(0.1,0.0){\circle*{0.1}} 
\put(1.1,0.0){\circle*{0.1}} 
\put(2.1,0.0){\circle*{0.1}} 
\put(1.1,1.0){\circle*{0.1}} 

\put(0.1,0.0){\vector(1,1){0.9}} 
\put(1.1,0.0){\vector(0,1){0.9}} 
\put(2.1,0.0){\vector(-1,0){0.9}} 

\put(1.1, 1.3){\makebox(0.0,0.0){1}}
\put(0.4,0.8){\makebox(0.0,0.0){$\boxed{6}$}}
\end{picture}~
\begin{picture}(2.3, 1.5)
\linethickness{0.2mm}

\put(0.1,0.0){\circle*{0.1}} 
\put(1.1,0.0){\circle*{0.1}} 
\put(2.1,0.0){\circle*{0.1}} 
\put(1.1,1.0){\circle*{0.1}} 

\put(0.1,0.0){\vector(1,1){0.9}} 
\put(1.1,0.0){\vector(-1,0){0.9}} 
\put(2.1,0.0){\vector(-1,0){0.9}}

\put(1.1, 1.3){\makebox(0.0,0.0){1}}
\put(0.3,0.8){\makebox(0.0,0.0){$\boxed{6}$}}
\end{picture}~
\begin{picture}(2.3, 1.1)
\linethickness{0.2mm}

\put(0.1,0.0){\circle*{0.1}} 
\put(1.1,0.0){\circle*{0.1}} 
\put(2.1,0.0){\circle*{0.1}} 
\put(1.1,1.0){\circle*{0.1}} 

\put(0.1,0.0){\vector(1,0){0.9}} 
\put(1.1,0.0){\vector(0,1){0.9}} 
\put(2.1,0.0){\vector(-1,0){0.9}} 

\put(1.1, 1.3){\makebox(0.0,0.0){1}}
\put(0.4,0.8){\makebox(0.0,0.0){$\boxed{3}$}}
\end{picture}
\vspace{1cm}
\end{minipage}
\\
$n=5$:\\
\begin{minipage}[t]{12cm}
\unitlength=1.4cm

\begin{picture}(3.3, 1.4)
\linethickness{0.2mm}
\put(0.1,0.0){\circle*{0.1}} 
\put(1.1,0.0){\circle*{0.1}} 
\put(2.1,0.0){\circle*{0.1}} 
\put(3.1,0.0){\circle*{0.1}} 
\put(1.6,1.0){\circle*{0.1}} 

\put(0.1,0.0){\vector(3,2){1.4}} 
\put(1.1,0.0){\vector(1,2){0.4}} 
\put(2.1,0.0){\vector(-1,2){0.4}} 
\put(3.1,0.0){\vector(-3,2){1.4}} 

\put(1.6, 1.2){\makebox(0.0,0.0){1}}

\put(0.4,0.8){\makebox(0.0,0.0){$\boxed{1}$}}
\end{picture}\hspace{5cm}
\begin{picture}(3.3, 1.4)
\linethickness{0.2mm}

\put(0.1,0.0){\circle*{0.1}} 
\put(1.1,0.0){\circle*{0.1}} 
\put(2.1,0.0){\circle*{0.1}} 
\put(3.1,0.0){\circle*{0.1}} 
\put(1.6,1.0){\circle*{0.1}} 

\put(0.1,0.0){\vector(3,2){1.4}} 
\put(1.1,0.0){\vector(1,2){0.4}} 
\put(2.1,0.0){\vector(-1,2){0.4}} 
\put(3.1,0.0){\vector(-1,0){0.9}} 

\put(1.6, 1.2){\makebox(0.0,0.0){1}}

\put(0.4,0.8){\makebox(0.0,0.0){$\boxed{12}$}}
\end{picture}

\begin{picture}(3.3, 1.4)
\linethickness{0.2mm}

\put(0.1,0.0){\circle*{0.1}} 
\put(1.1,0.0){\circle*{0.1}} 
\put(2.1,0.0){\circle*{0.1}} 
\put(3.1,0.0){\circle*{0.1}} 
\put(1.6,1.0){\circle*{0.1}} 

\put(0.1,0.0){\vector(3,2){1.4}} 
\put(1.1,0.0){\vector(1,2){0.4}} 
\put(2.1,0.0){\vector(-1,0){0.9}} 
\put(3.1,0.0){\vector(-1,0){0.9}} 

\put(1.6, 1.2){\makebox(0.0,0.0){1}}
\put(0.4,0.8){\makebox(0.0,0.0){$\boxed{24}$}}
\end{picture}
\hspace{5.3cm}
\begin{picture}(3.3, 1.4)
\linethickness{0.2mm}

\put(0.1,0.0){\circle*{0.1}} 
\put(1.1,0.0){\circle*{0.1}} 
\put(2.1,0.0){\circle*{0.1}} 
\put(3.1,0.0){\circle*{0.1}} 
\put(1.6,1.0){\circle*{0.1}} 

\put(0.1,0.0){\vector(3,2){1.4}} 
\put(1.1,0.0){\vector(-1,0){0.9}} 
\put(2.1,0.0){\vector(-1,0){0.9}} 
\put(3.1,0.0){\vector(-1,0){0.9}} 

\put(1.6, 1.2){\makebox(0.0,0.0){1}}

\put(0.4,0.8){\makebox(0.0,0.0){$\boxed{24}$}}
\end{picture}
\end{minipage}
\vspace{0.3cm}\\
\begin{minipage}[t]{12cm}
\unitlength=1.4cm
\begin{picture}(2.3, 2.3)
\linethickness{0.2mm}

\put(0.1,1.0){\circle*{0.1}} 
\put(1.1,1.0){\circle*{0.1}} 
\put(0.6,2.0){\circle*{0.1}} 
\put(0.6,0.0){\circle*{0.1}} 
\put(1.6,0.0){\circle*{0.1}} 

\put(0.1,1.0){\vector(1,2){0.45}} 
\put(1.1,1.0){\vector(-1,2){0.45}} 
\put(0.6,0.0){\vector(1,2){0.45}} 
\put(1.6,0.0){\vector(-1,2){0.45}} 

\put(0.6, 2.2){\makebox(0.0,0.0){1}}

\put(0.2,1.8){\makebox(0.0,0.0){$\boxed{12}$}}
\end{picture}~
\begin{picture}(2.3, 2.3)
\linethickness{0.2mm}

\put(0.1,1.0){\circle*{0.1}} 
\put(1.1,1.0){\circle*{0.1}} 
\put(0.6,2.0){\circle*{0.1}} 
\put(0.6,0.0){\circle*{0.1}} 
\put(1.6,0.0){\circle*{0.1}} 

\put(0.1,1.0){\vector(1,2){0.45}} 
\put(1.1,1.0){\vector(-1,2){0.45}} 
\put(0.6,0.0){\vector(-1,2){0.45}} 
\put(1.6,0.0){\vector(-1,2){0.45}} 

\put(0.6, 2.2){\makebox(0.0,0.0){1}}

\put(0.2,1.8){\makebox(0.0,0.0){$\boxed{12}$}}
\end{picture}~
\begin{picture}(2.3, 2.3)
\linethickness{0.2mm}

\put(0.1,1.0){\circle*{0.1}} 
\put(1.1,1.0){\circle*{0.1}} 
\put(0.6,2.0){\circle*{0.1}} 
\put(0.6,0.0){\circle*{0.1}} 
\put(1.6,0.0){\circle*{0.1}} 

\put(0.1,1.0){\vector(1,0){0.9}} 
\put(1.1,1.0){\vector(-1,2){0.45}} 
\put(0.6,0.0){\vector(1,2){0.45}} 
\put(1.6,0.0){\vector(-1,2){0.45}} 

\put(0.6, 2.2){\makebox(0.0,0.0){1}}

\put(0.2,1.8){\makebox(0.0,0.0){$\boxed{4}$}}
\end{picture}~
\begin{picture}(2.3, 2.3)
\linethickness{0.2mm}

\put(0.1,1.0){\circle*{0.1}} 
\put(1.1,1.0){\circle*{0.1}} 
\put(0.6,2.0){\circle*{0.1}} 
\put(0.6,0.0){\circle*{0.1}} 
\put(1.6,0.0){\circle*{0.1}} 

\put(0.1,1.0){\vector(1,-2){0.45}} 
\put(1.1,1.0){\vector(-1,2){0.45}} 
\put(0.6,0.0){\vector(1,2){0.45}} 
\put(1.6,0.0){\vector(-1,0){0.9}} 

\put(0.6, 2.2){\makebox(0.0,0.0){1}}

\put(0.2,1.8){\makebox(0.0,0.0){$\boxed{12}$}}
\end{picture}~
\begin{picture}(2.3, 2.3)
\linethickness{0.2mm}

\put(0.1,1.0){\circle*{0.1}} 
\put(1.1,1.0){\circle*{0.1}} 
\put(0.6,2.0){\circle*{0.1}} 
\put(0.6,0.0){\circle*{0.1}} 
\put(1.6,0.0){\circle*{0.1}} 

\put(0.1,1.0){\vector(1,0){0.9}} 
\put(1.1,1.0){\vector(-1,2){0.45}} 
\put(0.6,0.0){\vector(1,2){0.45}} 
\put(1.6,0.0){\vector(-1,0){0.9}} 

\put(0.6, 2.2){\makebox(0.0,0.0){1}}

\put(0.2,1.8){\makebox(0.0,0.0){$\boxed{24}$}}
\end{picture}

\vspace{1cm}
\end{minipage}

\end{exam}

The following lemma is easy but important.
\begin{lem}\label{LemmaDirectedPaths}
 Let $\gamma \in \Gamma_j$ for some $j\in C$ be a fixed directed rooted tree. For every vertex $\bar j\neq j$ there is a directed path in $\gamma$ which starts from  $\bar j$ and ends at $j$.
\end{lem}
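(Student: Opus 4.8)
The plan is to construct the required path greedily by repeatedly following the unique outgoing edge, and then to invoke acyclicity to guarantee that this process terminates precisely at the root $j$.

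First I would fix the given vertex $\bar j \neq j$ and use property b) of the definition of $\Gamma_j$: every vertex different from $j$ has exactly one outgoing edge. This lets me define a deterministic walk $\bar j = v_0, v_1, v_2, \dots$ by letting $v_{k+1}$ be the unique vertex with $e_{v_k v_{k+1}} \in E(\gamma)$. This rule is well-defined as long as $v_k \neq j$, and the walk can only halt upon reaching $j$, since $j$ is by definition the only vertex of $C$ with no outgoing edge.

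Next I would show that the walk actually reaches $j$ after finitely many steps. Since $C$ has only $n$ vertices, a walk that never hits $j$ would continue indefinitely and hence visit some vertex twice, say $v_a = v_b$ with $a < b$. Then the edges $e_{v_a v_{a+1}}, \dots, e_{v_{b-1} v_b}$ form a closed directed path, i.e.\ a directed loop in $\gamma$, contradicting property a) that $\gamma$ is acyclic. The very same pigeonhole-plus-acyclicity argument shows that no vertex can repeat \emph{before} $j$ is reached, so the visited vertices $v_0, \dots, v_m$ (with $v_m = j$) are distinct and the corresponding edges constitute a genuine directed path from $\bar j$ to $j$.

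The only point requiring any care — the mild ``main obstacle'' here — is the acyclicity step: one must check that a repeated vertex really yields a directed loop in the sense of the paper's definition. This holds precisely because each step of the walk traverses an edge oriented in the same towards-the-root direction, so a repetition closes up into an honest directed cycle. Once this is observed, the lemma follows at once.
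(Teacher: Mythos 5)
Your proof is correct and takes essentially the same route as the paper: both follow the deterministic walk determined by the unique outgoing edges (property b) and play finiteness of the vertex set off against acyclicity (property a). The only difference is organizational --- the paper argues by contradiction that a walk avoiding $j$ never repeats a vertex (by acyclicity) and therefore gets stuck once all vertices are exhausted, while you argue directly that a walk avoiding $j$ would have to repeat a vertex (by pigeonhole) and thereby close a directed loop; these are mirror images of the same argument.
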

\begin{proof}
Let us assume w.l.o.g that $\bar j =1$ and $j\neq1$. We prove the claim by contradiction, i.e. let us assume that there is no directed path from $\bar j=1$ to $j$ in $\gamma\in\Gamma_j$. We are going to construct a path to $j$ iteratively using the fact that every vertex in $\gamma$ except $j$ has exactly one outgoing edge by definition.  So, there is an edge starting from 1 which does not go to $j$ (by assumption) but goes to another vertex say 2. Then there is an edge starting from 2 does not go $j$ (again by assumption since otherwise there would be a path from $\bar j$ to $j$) and not to $1$ since the graph is acyclic. Hence it goes to another vertex say 3. Then there is an edge starting from 3 does not go to $j$, to 1 and to 2, hence it goes to say 4. Skipping the vertex $j$, we conclude till the last vertex $n$. But then, there is no suitable edge starting from $n$, since it can not go to any vertex $1,\dots, n-1$. We get a contradiction.
\end{proof}
In fact, the above proof also says that the path in the directed rooted tree unique.

\section{The Markov tree theorem}\label{SectionMarkovTreeTheorem}

Let a stochastic matrix $M=(m_{ij})_{i,j=1,\dots,n}$ be given, where $\# C = n \in \N$.
We define $w\in\R^n$ by
\begin{align}\label{StationMeasure}
 w_j = \sum_{\gamma\in\Gamma_j} \prod_{e_{ik}\in E(\gamma)} m_{ik}.
\end{align}
Note that $w$ is not normalized. The normalizing factor is $Z=\sum_{j=1}^n\sum_{\gamma\in\Gamma_j} \prod_{e_{ik}\in E(\gamma)} m_{ik}$ which contains all directed rooted trees.
\begin{exam}\label{Example3States}
 For $n=3$, we get $
  w=\begin{pmatrix}
     m_{21}m_{31} + m_{23}m_{31} + m_{21}m_{32}\\
     m_{12}m_{32} + m_{12}m_{31} + m_{13}m_{32}\\
     m_{13}m_{23} + m_{13}m_{21} + m_{12}m_{23}
    \end{pmatrix}.$
 %For $n=4$, we get for the first entry
 %\begin{align*}
  %w_1=(m_{21}m_{31}m_{41} + m_{21}m_{31}m_{42} + m_{21}m_{31}m_{43} + m_{21}m_{32}m_{41} + m_{21}m_{34}m_{41} + m_{23}m_{31}m_{41} + m_{24}m_{31}m_{41} + m_{21}m_{32}m_{43} + m_{23}m_{34}m_{41} + m_{24}m_{43}m_{31} + m_{21}m_{34}m_{42} + m_{21}m_{34}m_{42} +.... .
 %\end{align*}
\end{exam}
We want to show that $w^T M = w^T$, or equivalently that
\begin{align*}
 \forall k\in C: w_k = \sum_{j\in C} m_{jk} w_j
\end{align*}
Since for any $k\in C$ it holds $\sum_{j\in Z} m_{kj} =1$, the above condition is equivalent to
\begin{align*}
\forall k\in C: \sum_{j\in C} m_{kj} w_k =  \sum_{j\in C}m_{jk}w_j.
\end{align*}

\begin{thm}[Markov tree theorem]\label{MainTheorem}
 It holds $w^T M =w^T$ for $w=(w_j)_{j=1,\dots,n}$ defined by $w_j = \sum_{\gamma\in\Gamma_j} \prod_{e_{ik}\in E(\gamma)} m_{ik}$.
\end{thm}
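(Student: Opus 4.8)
The plan is to prove the equivalent identity recorded just before the theorem, namely that for each fixed $k\in C$ one has $\sum_{j\in C} m_{kj}\,w_k = \sum_{j\in C} m_{jk}\,w_j$, and to do so by interpreting both sides as weighted counts of one and the same family of graphs. Expanding the definition of $w$, a typical summand on the left is $m_{kj}\prod_{e_{il}\in E(\gamma)} m_{il}$ with $\gamma\in\Gamma_k$, and a typical summand on the right is $m_{jk}\prod_{e_{il}\in E(\gamma)} m_{il}$ with $\gamma\in\Gamma_j$. In either case one adjoins to a rooted tree a single extra edge leaving its root, producing a graph $g$ on the $n$ vertices of $C$ in which \emph{every} vertex has out-degree exactly one, and whose edge-weight product $\prod_{e_{il}\in E(g)} m_{il}$ equals the corresponding summand.

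First I would isolate the structural fact that drives the argument: a connected graph on $n$ vertices in which every vertex has out-degree exactly one contains a unique directed loop, and conversely, deleting the unique outgoing edge of any vertex lying on this loop leaves a directed tree rooted at that vertex. The existence of the loop follows by the iteration argument already used in Lemma \ref{LemmaDirectedPaths} (start at any vertex and follow outgoing edges until a vertex repeats); uniqueness, together with the fact that the deletion yields an acyclic connected graph of $n-1$ edges, uses that such a loop edge is never a bridge. Let $\mathcal{G}_k$ denote the set of these graphs whose unique loop passes through $k$.

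Next I would make the two bijections explicit. For the left-hand side, the map $(\gamma\in\Gamma_k,\ j)\mapsto \gamma\cup\{e_{kj}\}$ is a bijection onto $\mathcal{G}_k$: adjoining $e_{kj}$ turns $k$ into a vertex of out-degree one and creates the loop $k\to j\to\cdots\to k$ (the returning path from $j$ to $k$ exists by Lemma \ref{LemmaDirectedPaths}), so the image lies in $\mathcal{G}_k$, and the inverse simply deletes the unique outgoing edge of $k$. Since weights are preserved, the left-hand side equals $S_k:=\sum_{g\in\mathcal{G}_k}\prod_{e_{il}\in E(g)} m_{il}$. For the right-hand side, the map $(j,\ \gamma\in\Gamma_j)\mapsto \gamma\cup\{e_{jk}\}$ is a bijection onto the \emph{same} set $\mathcal{G}_k$: adjoining $e_{jk}$ creates the loop $k\to\cdots\to j\to k$, which again passes through $k$, and conversely, given $g\in\mathcal{G}_k$ one lets $j$ be the predecessor of $k$ on the loop (the unique in-edge of $k$ lying on the loop) and deletes $e_{jk}$ to recover a tree rooted at $j$. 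Hence the right-hand side also equals $S_k$, and the identity follows.

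The routine parts are the weight bookkeeping (each $g$ carries the product of its $n$ edge weights, matching both summands) and checking that the two maps are mutually inverse. I expect the main obstacle to be the structural lemma, in particular verifying that deleting one loop edge genuinely restores acyclicity and connectivity so that the result is a bona fide element of $\Gamma_k$ (resp.\ $\Gamma_j$), and that consequently each graph in $\mathcal{G}_k$ is produced exactly once from each side; this is precisely where uniqueness of the loop and the defining out-degree condition of rooted trees must be used with care (the degenerate self-loop case $j=k$, giving $e_{kk}$, being handled consistently by both maps).
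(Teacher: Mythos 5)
Your proposal is correct and follows essentially the same route as the paper: your structural lemma about the unique loop created by adjoining an out-edge to the root is Lemma \ref{LemmaExactlyOneLoop}, and your two weight-preserving bijections onto $\mathcal{G}_k$ amount to the paper's Proposition \ref{PropositionS1equalsS2}, which shows the two families $S_1$ and $S_2$ of augmented trees coincide. The only cosmetic difference is that you name the common image set intrinsically (all out-degrees one, unique loop through $k$), whereas the paper simply proves the two sets equal by the same edge-deletion arguments.
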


In the proof, we compute both sides and compare. To do so, we focus on $k=1$, but the other cases can be treated exactly the same way. We want to show
\begin{align}\label{ToShow}
 \sum_{j\geq 2} m_{1j} w_1 = \sum_{j\geq 2}m_{j1}w_j.
\end{align}

\begin{exam}
 Let us compute the left- and right-hand side for $n=3$. Using Example \ref{Example3States}, we have
 \begin{align*}
  % w=(m_{21}m_{31} + m_{23}m_{31} + m_{21}m_{32}, m_{12}m_{32} + m_{12}m_{31} + m_{13}m_{32}, m_{13}m_{23} + m_{13}m_{21} + m_{12}m_{23}).
  L&HS = m_{12}w_1 + m_{13}w_1 =\\
  &= m_{12}m_{21}m_{31} + m_{12}m_{23}m_{31} + m_{12}m_{21}m_{32} + m_{13}m_{21}m_{31} + m_{13}m_{23}m_{31} + m_{13}m_{21}m_{32}\\
  \vspace{1cm}\\
  R&HS = m_{21} w_2 + m_{31} w_3 =\\
  &= m_{21}m_{12}m_{32} + m_{21}m_{12}m_{31} + m_{21}m_{13}m_{32} + m_{31}m_{13}m_{23} + m_{31}m_{13}m_{21} + m_{31}m_{12}m_{23}.
 \end{align*}
 Hence, (\ref{ToShow}) holds.
\end{exam}

Observe that in the formula (\ref{StationMeasure}) only edges that do not start in $j$ are taken into account. In the identity (\ref{ToShow}), the matrix entries that correspond to edges starting form $j$ are multiplied to $w_j$. That means, we have to treat graphs which emerge from rooted directed graphs  through adding one additional edge. If $e_{jk}$ is not an edge in a graph $\gamma$, let denote $\gamma\cup e_{jk}$ the graph that results from adding the edge $e_{jk}$ to $\gamma$. 

\begin{lem}\label{LemmaExactlyOneLoop}
 Let $\gamma\in\Gamma_j$ and $k\in C$ be arbitrary. Then the graph $\gamma \cup e_{jk}$ contains exactly one loop. Moreover, this loop goes through $j\in C$.
\end{lem}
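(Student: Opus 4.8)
The plan is to exploit the fact that adding the edge $e_{jk}$ turns $\gamma$ into a graph in which \emph{every} vertex has exactly one outgoing edge. Indeed, by definition each $\bar j\neq j$ already has a unique outgoing edge in $\gamma$, while the root $j$ had none; the new edge $e_{jk}$ supplies $j$ with its single outgoing edge. So $\gamma\cup e_{jk}$ is a graph on $n$ vertices with $n$ edges in which following outgoing edges from any vertex determines a unique deterministic trajectory.

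First I would establish existence of a loop through $j$. If $k=j$, the added edge is the self-loop $e_{jj}$, which is already a loop through $j$. If $k\neq j$, then by Lemma~\ref{LemmaDirectedPaths} there is a directed path in $\gamma$ from $k$ to $j$; concatenating $e_{jk}$ with this path yields a directed loop $j\to k\to\cdots\to j$ passing through $j$.

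Next I would prove that this loop is the only one. The key observation is that any directed loop $L$ in $\gamma\cup e_{jk}$ must use the edge $e_{jk}$: since $\gamma$ itself is acyclic, $L$ cannot be contained in $\gamma$, and $e_{jk}$ is the only edge of $\gamma\cup e_{jk}$ not belonging to $\gamma$. In particular every loop passes through $j$, which already proves the second assertion. For uniqueness, note that a loop using $e_{jk}$ has the form $j\to k\to\cdots\to j$, where the portion from $k$ back to $j$ is a directed path inside $\gamma$. Because in the rooted tree every vertex other than $j$ has exactly one outgoing edge, this return path is forced step by step and is therefore unique --- this is precisely the uniqueness noted after Lemma~\ref{LemmaDirectedPaths}. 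Hence there is exactly one loop.

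The only point requiring care is the degenerate case $k=j$ and the precise justification that the deterministic trajectory out of $k$ must terminate at $j$ rather than wander; both are handled by the out-degree-one property together with Lemma~\ref{LemmaDirectedPaths}, so I expect no genuine obstacle here. An alternative, more structural argument would invoke the general fact that each weakly connected component of such a graph contains exactly one directed cycle, combined with the (weak) connectedness of $\gamma\cup e_{jk}$; but the direct argument above is shorter and uses only the lemmas already at hand.
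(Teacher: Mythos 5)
Your proof is correct and takes essentially the same route as the paper's: existence of the loop comes from Lemma~\ref{LemmaDirectedPaths} (the directed path from $k$ back to $j$, closed up by the new edge $e_{jk}$), and uniqueness from the acyclicity of $\gamma$ together with the out-degree-one property, which forces the return path. You are in fact slightly more careful than the paper, since you handle the degenerate case $k=j$ and explicitly justify that every loop must use $e_{jk}$ and hence pass through $j$, which the paper dismisses as obvious.
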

\begin{proof}
 We know by Lemma \ref{LemmaDirectedPaths} that from every vertex in $\gamma$ there is a directed path back to $j$. Hence by adding one more edge $e_{jk}$, there will be definitively a loop. More than one loop is not possible, since there are no loops in $\gamma$ and there is only one edge starting from each vertex. So exactly one loop is in $\gamma\cup e_{jk}$. The loop goes obviously through $j$.
\end{proof}
We define two sets:
\begin{align*}
 S_1&:=\{\gamma\cup e_{1j}: \gamma\in \Gamma_1,~ j\in\{2,\dots, n\} \}\\
 S_2&:=\{\gamma_k\cup e_{k1}: \gamma_k\in \Gamma_k,\mathrm{~~for~~} k\in\{2,\dots, n\}\}
\end{align*}
Firstly, observe that any two elements in $S_1$ are different. The same holds for the elements of $S_2$. The key step is to show that both sets $S_1$ and $S_2$ are equal. 
\begin{prop}\label{PropositionS1equalsS2}
 It holds $S_1=S_2$.
\end{prop}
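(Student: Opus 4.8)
The plan is to exhibit $S_1$ and $S_2$ as two descriptions of a single family of graphs. I would introduce $G$, the set of all graphs on the $n$ vertices of $C$ in which every vertex has exactly one outgoing edge and which contain exactly one directed loop, and this loop passes through the vertex $1$. Each such graph has $n$ edges and is automatically connected, because a graph with constant out-degree one splits into components each carrying exactly one loop, so a single loop forces a single component. I would then prove $S_1=G$ and $S_2=G$ separately; the proposition $S_1=S_2$ follows immediately.

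For $S_1\subseteq G$ and $S_2\subseteq G$ I would invoke Lemma \ref{LemmaExactlyOneLoop}. If $\gamma\in\Gamma_1$, then every vertex except $1$ already has one outgoing edge, so in $\gamma\cup e_{1j}$ all $n$ vertices have out-degree one; by Lemma \ref{LemmaExactlyOneLoop} there is exactly one loop and it runs through $1$, whence $\gamma\cup e_{1j}\in G$. If $\gamma_k\in\Gamma_k$, then in $\gamma_k\cup e_{k1}$ every vertex again has out-degree one and the unique loop passes through $k$; since the only edge leaving $k$ is $e_{k1}$ and it points to $1$, this loop also contains the vertex $1$, so $\gamma_k\cup e_{k1}\in G$.

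For the reverse inclusions I would delete the appropriate loop edge. Given $g\in G$, the vertex $1$ has a single outgoing edge $e_{1j}$, and because the loop passes through $1$ this edge lies on the loop, so $j\neq 1$. Removing it, the graph $g\setminus e_{1j}$ has $n$ vertices, is acyclic, has vertex $1$ with no outgoing edge and every other vertex with exactly one; these are exactly properties a) and b) defining $\Gamma_1$, so $g\setminus e_{1j}\in\Gamma_1$ and $g\in S_1$. Symmetrically, writing $k\neq 1$ for the predecessor of $1$ on the loop, the edge $e_{k1}$ entering $1$ is on the loop, and deleting it leaves an acyclic graph with $k$ as the unique vertex without an outgoing edge, i.e.\ an element of $\Gamma_k$; hence $g\in S_2$. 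Together with the first step this yields $S_1=G=S_2$.

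The step I expect to need the most care is the acyclicity of $g\setminus e_{1j}$ and $g\setminus e_{k1}$: one must be sure that deleting a single edge of the loop really destroys it and cannot leave another loop behind. This is precisely what the uniqueness in Lemma \ref{LemmaExactlyOneLoop} provides, since there was only one loop to start with and removing an edge never creates a loop. Once acyclicity is secured, the out-degree conditions and the facts $j\neq 1$, $k\neq 1$ (which use only that the loop has length at least two) are immediate, and the identification of $S_1$ and $S_2$ with $G$ requires no computation.
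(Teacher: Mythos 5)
Your proof is correct and follows essentially the same route as the paper: both rest on Lemma \ref{LemmaExactlyOneLoop} together with the observation that deleting the unique loop edge entering (respectively leaving) the vertex $1$ turns the graph back into a rooted tree, and your intermediate set $G$ is only an organizational device making the two inclusions symmetric, where the paper instead proves $S_1\subset S_2$ and $S_2\subset S_1$ directly by the very same edge-deletion argument. The one point to make explicit is that your claim that the loop has length at least two tacitly uses the paper's convention that all graphs considered are subgraphs of the complete directed graph on the $n$ vertices, i.e.\ contain no self-loops; with that convention your argument is complete.
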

\begin{proof}
 The proof is done in two steps.\\
 1. Step: $S_1\subset S_2$. Let $\gamma\cup e_{1j}\in S_1$ for $\gamma\in \Gamma_1$ and $j\in\{2, \dots, N\}$ be arbitrary and fixed. That means, that one edge starting from 1 with arbitrary end is added to some graph $\gamma \in \Gamma_1$. By Lemma \ref{LemmaExactlyOneLoop}, there is exactly one loop in $\gamma\cup e_{1j}$. In particular there is one unique edge in the loop which ends at 1 and starts at say $\bar j$.
 
 Let us consider the graph $\gamma\cup e_{1j}$ without the edge $e_{\bar j1}$. We call it $\bar \gamma$ and want to show that $\bar\gamma \in \Gamma_{\bar j}$. Firstly, we observe that for each vertex $k$ apart from $\bar j$ there is in $\bar \gamma$ exactly one edge which starts at $k$. Hence, it remains to show that there is no loop in $\bar \gamma$. Look at $\gamma \cup e_{1j}$. It has exactly one loop, but removing one edge $e_{\bar j 1}$ the loop is destroyed. That shows that $\bar\gamma \in \Gamma_{\bar j}$ and hence we have shown that $\gamma\cup e_{1j} = \bar \gamma \cup e_{\bar j 1}$ for some $\bar\gamma \in \Gamma_{\bar j}$. This proves the first claim.\\
 
 2. Step: $S_2\subset S_1$. Let $\gamma_k\cup e_{k1}\in S_2$ for $\gamma_k\in \Gamma_k$ and $k\in\{2, \dots, N\}$ be arbitrary and fixed. That means, that one edge starting from $k$ with the end 1 is added to $\gamma_k\in\Gamma_k$.  As above by Lemma \ref{LemmaExactlyOneLoop}, it follows that there is one loop in $\gamma_k\cup e_{k1}\in S_2$ which goes through $k$ and hence also through 1. Moreover, the loops defines a unique edge that starts in 1 and goes to another vertex say $j$. Let us consider the graph $\gamma_k\cup e_{k1}$ without the edge $e_{1j}$. As above one can show that this graph is in $\Gamma_1$, i.e. there is $\gamma\in\Gamma_1$ such that $\gamma_k\cup e_{k1} = \gamma\cup e_{1j}$. This proves $S_2\subset S_1$.
\end{proof}
We are now able to prove Theorem \ref{MainTheorem}.
\begin{proof}
 As above mentioned, we prove only $(w^TM)_1 = w_1$, i.e. the identity (\ref{ToShow}).
 Using Proposition \ref{PropositionS1equalsS2}, we compute
 \begin{align*}
  \sum_{j\geq 2} m_{1j}w_1 &= \sum_{j\geq 2} m_{1j} \sum_{\gamma\in\Gamma_1}\prod_{e_{ik}\in E(\gamma)} m_{ik}=\sum_{j\geq 2}  \sum_{\gamma\in\Gamma_1}m_{1j}\prod_{e_{ik}\in E(\gamma)} m_{ik} =\\
  &=\sum_{j\geq 2}  \sum_{\gamma\in\Gamma_1}\prod_{e_{ik}\in E(\gamma \cup e_{1j})} m_{ik}= \sum_{\gamma \in S_1} \prod_{e_{ik}\in E(\gamma)} m_{ik}=\\
  &\stackrel{\mathclap{\tiny{S_1=S_2}}}{=}\;\sum_{\gamma \in S_2} \prod_{e_{ik}\in E(\gamma)} m_{ik} =\sum_{j\geq 2}\sum_{\gamma \in \Gamma_j} \prod_{e_{ik}\in E(\gamma\cup e_{j1})} m_{ik} =\\
  &= \sum_{j\geq 2} \sum_{\gamma\in\Gamma_j}m_{j1}\prod_{e_{ik}\in E(\gamma)} m_{ik}= \sum_{j\geq 2} m_{j1}\sum_{\gamma\in\Gamma_j}\prod_{e_{ik}\in E(\gamma)} m_{ik} = \sum_{j\geq 2} m_{j1}w_j.
 \end{align*}
\end{proof}

\begin{exam}
 Let us consider $n=5$ states with the following reaction graph and the associated stochastic matrix.
 
\begin{minipage}[c]{4cm}
\unitlength=1.6cm
\hspace{1cm}
\begin{picture}(2.3, 2.3)
\linethickness{0.2mm}

\put(0.1,1.0){\circle*{0.1}} %1
\put(2.1,1.0){\circle*{0.1}} %3
\put(1.1,2.0){\circle*{0.1}} %2
\put(0.6,0.0){\circle*{0.1}} %5
\put(1.6,0.0){\circle*{0.1}} %4

\put(0.1,1.0){\vector(1,1){0.9}} 
\put(1.1,2.0){\vector(1,-1){0.9}} 
\put(2.1,1.0){\vector(-1,-2){0.45}} 
\put(1.6,0.0){\vector(-1,0){0.9}} 
\put(0.6,0.0){\vector(-1,2){0.45}} 

\put(0.1,1.2){\makebox(0.0,0.0){1}} %1
\put(2.1,1.2){\makebox(0.0,0.0){3}} %3
\put(1.1,1.8){\makebox(0.0,0.0){2}} %3
\put(0.63,0.2){\makebox(0.0,0.0){5}} %5
\put(1.53,0.2){\makebox(0.0,0.0){4}} %4

\put(0.4,1.6){\makebox(0.0,0.0){$m_{12}$}}
\put(1.6,1.8){\makebox(0.0,0.0){$m_{23}$}}
\put(1.6,0.5){\makebox(0.0,0.0){$m_{34}$}}
\put(1.1,0.1){\makebox(0.0,0.0){$m_{45}$}}
\put(0.58,0.5){\makebox(0.0,0.0){$m_{51}$}}
\end{picture}
\end{minipage}
\hspace{1cm}
$\longleftrightarrow$
\hspace{1cm}
\begin{minipage}[c]{8cm}
$
\begin{pmatrix}
 1- m_{12}& m_{12} & 0 & 0 & 0\\
0 & 1- m_{23}& m_{23} & 0 & 0 \\
0 & 0 & 1- m_{34}& m_{34} & 0 \\
0 & 0 & 0 & 1- m_{45}& m_{45}\\
m_{51} & 0 & 0 & 0 & 1-m_{51}
\end{pmatrix}.$
\end{minipage}
\vspace{0.3cm}\newline
%The associated stochastic matrix is
%\begin{align*}M=
%\begin{pmatrix}
 %1- m_{12}& m_{12} & 0 & 0 & 0\\
%0 & 1- m_{23}& m_{23} & 0 & 0 \\
%0 & 0 & 1- m_{34}& m_{34} & 0 \\
%0 & 0 & 0 & 1- m_{45}& m_{45}\\
%m_{51} & 0 & 0 & 0 & 1-m_{51}
%\end{pmatrix}.
%\end{align*}
Formula (\ref{StationMeasure}) yields
\begin{align*}
 w&=(
 m_{23}m_{34}m_{45}m_{51},
 m_{12}m_{34}m_{45}m_{51},
 m_{12}m_{23}m_{45}m_{51},
 m_{12}m_{23}m_{34}m_{51},
 m_{12}m_{23}m_{34}m_{45})^T \\ &\sim(
 1/m_{12},
 1/m_{23},
 1/m_{34},
 1/m_{45},
 1/m_{51})
\end{align*}
as its invariant measure (despite normalization).
\end{exam}

\begin{rem}
 We want to stress that formula (\ref{StationMeasure}) always defines a vector $w$ such that $w^T M = w^T$ regardless whether $M$ is reducible or not. But it can happen that formula (\ref{StationMeasure}) defines a vector that is identically zero. This case is treated in Section \ref{SectionPositivity}.  
\end{rem}

\begin{rem}
 Proving Theorem \ref{MainTheorem}, we do not use $m_{ij}\geq 0$, i.e. also negative matrix elements are possible. The only property of the matrix $M$ that we used is that the elements in any row sum to one. Consider for example the matrix
 \begin{align*}
  M=\begin{pmatrix}
   1 & -1 & 1\\
   1 & 1 & -1\\
   -1 & 1 & 1
  \end{pmatrix}.
 \end{align*}
 We have $M=I + A$, where $A$ is an incidence matrix, often used to model electric circuits or mechanical systems of springs and masses.
 $M$ has eigenvalues $1$ and $1\pm i\sqrt 3$. Formula (\ref{StationMeasure}) yields $(1,1,1)$ as the invariant measure of $M$.
\end{rem}

\section{Positivity and Uniqueness}\label{SectionPositivity}

The aim of this section is to show whenever formula (\ref{StationMeasure}) provides a reasonable (i.e. a non-zero) vector the invariant measure is unique; or vice versa if the invariant measure is unique then formula (\ref{StationMeasure}) defines a non-zero vector. Let $\gamma_0$ be the graph defined by a given stochastic matrix $M=(m_{jk})$, i.e. $\gamma_0$ consists of all edges $e_{jk}$ such that $m_{jk}>0$. 
\begin{align*}
 Z_k = \{j\in Z: \mathrm{there ~is~a~directed~path~from~}j\mathrm{~to~}k\mathrm{~in~}\gamma_0\}.
\end{align*}
Obviously, $M$ is irreducible if and only if $\bigcap_{k\in Z} Z_k = Z$. The next proposition is helpful.
\begin{prop}\label{PropCharacterizazionPositivity}
 Let $w$ be defined as (\ref{StationMeasure}). Then
 $Z_k =Z$ if and only if $w_k >0$.
\end{prop}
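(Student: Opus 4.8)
The plan is to translate the positivity of $w_k$ into a purely graph-theoretic statement about the existence of a rooted tree inside $\gamma_0$, and then to prove the two implications separately. Since all entries $m_{ij}\geq 0$, every summand $\prod_{e_{ik}\in E(\gamma)} m_{ik}$ in (\ref{StationMeasure}) is non-negative, so $w_k\geq 0$ always holds, and $w_k>0$ holds if and only if at least one summand is strictly positive. A product over the edges of a tree $\gamma\in\Gamma_k$ is strictly positive exactly when every edge $e_{ik}$ of $\gamma$ satisfies $m_{ik}>0$, i.e. exactly when $\gamma$ is a subgraph of $\gamma_0$. Hence the claim reduces to the following equivalence: $\gamma_0$ contains some directed tree rooted at $k$ (an element of $\Gamma_k$) if and only if $Z_k=Z$.

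For the implication $w_k>0 \Rightarrow Z_k=Z$ I would argue directly. If $w_k>0$, then by the reduction above there is a tree $\gamma\in\Gamma_k$ with $\gamma\subset\gamma_0$. By Lemma \ref{LemmaDirectedPaths}, inside $\gamma$ every vertex $j\neq k$ has a directed path ending at $k$; since $\gamma\subset\gamma_0$, this path also lives in $\gamma_0$. Therefore every $j$ lies in $Z_k$, so $Z_k=Z$. This direction is immediate and uses only the already-proven lemma.

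The substance of the proof is the reverse implication $Z_k=Z \Rightarrow w_k>0$, where one must actually construct a rooted tree inside $\gamma_0$. My plan is to pass to the reversed graph $\gamma_0^{\mathrm{rev}}$, obtained by flipping the orientation of every edge. The condition $Z_k=Z$ says precisely that in $\gamma_0^{\mathrm{rev}}$ every vertex is reachable from $k$ by a directed path. I would then grow a spanning tree from $k$ in $\gamma_0^{\mathrm{rev}}$ by the standard breadth-first procedure: starting from $\{k\}$, repeatedly adjoin one new vertex together with one edge of $\gamma_0^{\mathrm{rev}}$ joining it to a vertex already reached. Reachability of all vertices guarantees the process does not stall before exhausting $Z$, and by construction the resulting edge set forms a tree in which $k$ reaches every vertex and each $j\neq k$ receives exactly one incoming edge. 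Reversing the orientations back gives a subgraph $\gamma$ of $\gamma_0$ in which every $j\neq k$ has exactly one outgoing edge, $k$ has none, and which is acyclic (reversing an acyclic tree keeps it acyclic); thus $\gamma\in\Gamma_k$ and $\gamma\subset\gamma_0$, so the corresponding summand is positive and $w_k>0$.

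The only delicate point, and the place where care is needed, is verifying that the tree produced by the breadth-first growth in $\gamma_0^{\mathrm{rev}}$ really yields a legitimate element of $\Gamma_k$ after reversal — namely that acyclicity and the condition "exactly one outgoing edge for each $j\neq k$, none for $k$" hold simultaneously. This is exactly what the incremental construction enforces, since each newly adjoined vertex contributes a single edge pointing back toward the already-built part, so no directed cycle is ever created and no vertex other than the root acquires a second parent. I expect this bookkeeping to be the main, though routine, obstacle of the argument.
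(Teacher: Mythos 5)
Your proposal is correct and takes essentially the same route as the paper: the easy direction ($w_k>0 \Rightarrow Z_k=Z$) is the paper's Step 2 using Lemma \ref{LemmaDirectedPaths}, and your incremental growth of a spanning tree in the edge-reversed graph $\gamma_0^{\mathrm{rev}}$ is just a re-orientation of the paper's layered construction of the sets $V_0, V_1, E_1, V_2, E_2, \dots$ inside $\gamma_0$. The only cosmetic difference is that you adjoin one vertex at a time after reversing all edges, while the paper adjoins whole distance-layers with edges kept oriented toward the root; both hinge on the same observation that $Z_k=Z$ prevents the construction from stalling.
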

\begin{proof}
We focus again on the case $k=1$.\\ 
1. Step: Let $Z_1=Z$. We want to show that $w_1> 0$.\\
The problem reduces to the following question. Let a graph $\widetilde \gamma$ with $Z_1=Z$ be given, i.e. from any vertex $j\neq 1$ there is a directed path to 1. Is it possible to obtain a subgraph which is a directed tree rooted at 1, i.e. $\gamma\in\Gamma_1$ by removing edges from $\widetilde \gamma$? It is not hard to see that this is indeed possible and actually there are many ways to construct a suitable $\gamma\in\Gamma_1$. In the following we present one possible way of construction.

For a given graph $\widetilde \gamma$ with $Z_1=Z$, let us define $V_0=\{1\}$ and let $V_1$ be the set of all vertices from which an edge to the vertex 1 starts. Collect all these edges (we call it $E_1$) and remove any other edge that starts from a vertex in $V_1$. Obviously, the graph spanned by $V_0$, $V_1$ and $E_1$ is a subgraph of a spanning tree rooted in 1. Now, let $V_2$ be the set of all vertices from which an edge to some vertex in $V_1$ starts. Collect for any vertex in $V_2$ exactly one edge that goes to some vertex in $V_1$. If there are many choices take an arbitrary one. Remove any other edge that starts from some vertex in $V_2$. We call the set of edges $E_2$. Again, it is clear that the graph spanned by $V_0$, $V_1$, $E_1$, $V_2$ and $E_2$ is a subgraph of a spanning tree rooted in 1. Now proceed as above and we get sets of vertices $V_k$ and edges $E_k$. Observe that $V_k$ contains vertices which have the distance $k$ to the vertex $1$ and hence the construction necessarily stops after at most $N-1$ steps. Define $\gamma$ as the union of $V_0$ and all $V_k$ and $E_K$. Since by assumption $Z_1=Z$ in the end any vertex is contained $\gamma$ and by construction it is clear that $\gamma\in\Gamma_1$. Hence, $w_1>0$.

2. Step: Let $w_1>0$. So, there is at least one graph $\gamma\in\Gamma_1$ such that for any edge $e_{ki}\in E(\gamma)$ it holds $m_{ki}>0$. Hence, there is a path starting from any $j\neq 1$ and ending at $1$ and we get $Z_1=Z$.
 
\end{proof}

\begin{cor}
 Let $n\geq 3$. We have $w=0$ defined by (\ref{StationMeasure}) if and only if the invariant measure is not unique.
\end{cor}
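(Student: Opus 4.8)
The plan is to prove the logically equivalent contrapositive, namely that $w \neq 0$ if and only if the invariant measure is unique, and then negate both sides; this negation is legitimate because in the present section $M$ is a genuine stochastic matrix with $m_{jk}\geq 0$, so $w\geq 0$ and $w_k=0$ is exactly the failure of $w_k>0$. Two ingredients are already at hand. First, Proposition \ref{PropCharacterizazionPositivity} gives $w_k>0 \iff Z_k=Z$, so that $w\neq 0$ holds if and only if there is some vertex $k$ reachable from every vertex (that is, with $Z_k=Z$). Second, the structure result of Section \ref{SectionStochMatrixAndReactionGraph} states that every invariant measure is a convex combination of the $\widetilde\mu_j$ attached to the communicating classes making up $Z_1\cup Z_2$; since these $\widetilde\mu_j$ have disjoint supports they are independent, so the space of invariant measures has dimension equal to the number of classes in $Z_1\cup Z_2$, and the invariant measure is unique precisely when $Z_1\cup Z_2$ is a single communicating class.

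The remaining work is purely graph-theoretic. Call a communicating class \emph{closed} if no edge leaves it; these are exactly the classes forming $Z_1\cup Z_2$. I would first record two elementary facts about the condensation graph (the acyclic graph on communicating classes): it always has at least one closed class, and from any vertex one can follow outgoing edges and, the graph being finite and the condensation acyclic, eventually land inside some closed class. The claim I then want is that there exists a vertex reachable from all vertices if and only if there is exactly one closed class.

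For the direction ``exactly one closed class $C^{*}$ implies a globally reachable vertex'', I would argue that every vertex reaches $C^{*}$: following edges from an arbitrary vertex terminates in a closed class, which must be $C^{*}$ by uniqueness; and since $C^{*}$ is a communicating class it is strongly connected, so each $k\in C^{*}$ is reachable from all of $Z$, i.e. $Z_k=Z$. For the converse, suppose some $k$ has $Z_k=Z$. Any closed class $C$ must reach $k$, but a closed class reaches only vertices inside itself, forcing $k\in C$; hence every closed class contains $k$ and there can be only one. Combining this equivalence with the two ingredients above gives $w\neq 0 \iff Z_k=Z$ for some $k \iff$ there is exactly one closed class $\iff$ the invariant measure is unique, and negating proves the corollary.

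The main obstacle, and the only place care is needed, is the graph-theoretic equivalence, whose crux is the observation that a closed communicating class can reach no vertex outside itself; this is precisely what makes two distinct closed classes incompatible with the existence of a globally reachable vertex, and hence with $w\neq 0$. The hypothesis $n\geq 3$ is not essential to the argument: the reasoning is valid whenever the decomposition $Z=Z_1\cup Z_2\cup Z_R$ and Proposition \ref{PropCharacterizazionPositivity} are available, and it merely excludes degenerate small cases.
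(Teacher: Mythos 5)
Your proof is correct and takes essentially the same approach as the paper: both combine Proposition \ref{PropCharacterizazionPositivity} with the decomposition of Section \ref{SectionStochMatrixAndReactionGraph}, and both rest on the same graph-theoretic core --- every path eventually enters a closed (absorbing) communicating class, such a class reaches only itself, hence a globally reachable vertex exists if and only if there is exactly one closed class. Your version merely organizes this as an explicit chain of equivalences (and correctly observes that the hypothesis $n\geq 3$ is inessential), whereas the paper argues the two implications directly.
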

\begin{proof}
 If the invariant measure is not unique, we have at least two equivalence classes $C_1, C_2$  such that there is no path from $C_1$ to $C_2$ and no path from $C_2$ to $C_1$ (see Section \ref{SectionStochMatrixAndReactionGraph}). By Proposition (\ref{PropCharacterizazionPositivity}), we conclude $w_j = 0$ for any $j\in Z$. Hence $w=0$.
 
 Let $w=0$. If the graph is totally disconnected, then following the ideas in Section \ref{SectionStochMatrixAndReactionGraph} the invariant measure is surely not unique. So let us assume that the graph is (weakly) connected, i.e. there is at least a path in one direction connecting the different equivalence classes. We want to show that there are at least two communicating classes such that there is no path starting from them, i.e. using notation in Section (\ref{SectionStochMatrixAndReactionGraph}) they belong to $Z_+$. Surely, there is one communicating class, say $C_1$. Since $w|_{C_1} =0$, there is a state (say 2) without any path to $C_1$. Let us denote the communicating class of 2 by $C_2$ and consider the graph $\tilde \gamma$ defined by all communicating classes that can be reached from $C_2$. There is definitively a communicating class in $\tilde \gamma$ without starting paths to other communicating classes. And this communicating class is not $C_1$ since we assumed no path from $2$ to $C_1$. Hence, we found two communicating classes without starting paths, i.e. the invariant measure is not unique.
 
\end{proof}

\section{Cardinality of the sets of graphs}\label{SectionCardinality}
Now we compute the number of addends in (\ref{StationMeasure}). To do this, we introduce the following subsets of directed acyclic graphs. For $k\in\{1, \dots, n\}$ mark $k$ vertices among all $n$ vertices, say $\{j_1, \dots, j_k\}$. We define $\Gamma_{\{j_1, \dots, j_k\}}$ as a subset of directed graphs $\gamma$ with the following properties:
\begin{itemize}
 \item[a)] $\gamma$ is a directed acyclic graph with $n$ vertices.
 \item[b)] From each vertex $j \in \{j_1, \dots, j_k\}$ starts exactly one directed edge that ends at some other vertex $\bar j \in C\setminus\{j\}$.
\end{itemize}
Remembering the notation in Section \ref{SectionRootedTrees}, we see that $\Gamma_{\{1, \dots, n\}\setminus \{j\}}=\Gamma_j$.

Let us compute the cardinality of $\Gamma_{\{j_1, \dots, j_k\}}$.
\begin{prop}
 It holds $\# ~\Gamma_{\{j_1, \dots, j_k\}} = (n-k)n^{k-1}$
\end{prop}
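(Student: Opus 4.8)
The plan is to prove the count by induction on the number $k$ of marked vertices, after first recording the right structural picture. Observe that every graph $\gamma\in\Gamma_{\{j_1,\dots,j_k\}}$ has out-degree exactly $1$ at each of the $k$ marked vertices and out-degree $0$ at each of the $n-k$ unmarked ones; since $\gamma$ is acyclic and out-degrees are at most $1$, following the outgoing edges from any vertex terminates at an unmarked vertex (this is the argument of Lemma \ref{LemmaDirectedPaths}). Hence $\gamma$ is precisely a forest of rooted trees oriented towards its roots, whose root set is the collection of $n-k$ unmarked vertices. For the base case $k=1$ the single marked vertex $j_1$ sends its unique edge to any of the other $n-1$ vertices and acyclicity is automatic, so $\#\,\Gamma_{\{j_1\}}=n-1=(n-1)n^{0}$, as required.

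For the inductive step I would pass from $k-1$ to $k$ by fibering over the out-edge of the vertex $j_k$, which is unmarked (hence a root) in the $(k-1)$-case. Deleting $j_k$'s unique outgoing edge sends $\Gamma_{\{j_1,\dots,j_k\}}$ onto $\Gamma_{\{j_1,\dots,j_{k-1}\}}$. Conversely, given $\gamma'\in\Gamma_{\{j_1,\dots,j_{k-1}\}}$, an edge $e_{j_k v}$ may be attached at $j_k$ without creating a loop exactly when $v$ does not lie in the tree $T_{j_k}(\gamma')$ rooted at $j_k$: attaching $e_{j_k v}$ produces a loop iff there is already a directed path from $v$ back to $j_k$, and by the orientation-towards-roots structure such a path exists iff $v\in T_{j_k}(\gamma')$ (this is the mechanism of Lemma \ref{LemmaExactlyOneLoop}). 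This gives a bijection between $\Gamma_{\{j_1,\dots,j_k\}}$ and the pairs $(\gamma',v)$ with $v\notin T_{j_k}(\gamma')$, so that
\[
\#\,\Gamma_{\{j_1,\dots,j_k\}} \;=\; \sum_{\gamma'\in\Gamma_{\{j_1,\dots,j_{k-1}\}}}\bigl(n-|T_{j_k}(\gamma')|\bigr).
\]

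The only remaining work is to evaluate $\sum_{\gamma'}|T_{j_k}(\gamma')|$, and here I would use symmetry. The set $\Gamma_{\{j_1,\dots,j_{k-1}\}}$ is invariant under relabelling its $n-k+1$ roots, so $\sum_{\gamma'}|T_{\rho}(\gamma')|$ takes the same value for every root $\rho$. Since for each fixed $\gamma'$ the tree sizes over all roots sum to $n$, summing over the $n-k+1$ roots yields
\[
\sum_{\gamma'}|T_{j_k}(\gamma')| \;=\; \frac{n}{\,n-k+1\,}\;\#\,\Gamma_{\{j_1,\dots,j_{k-1}\}}.
\]
Substituting this and the induction hypothesis $\#\,\Gamma_{\{j_1,\dots,j_{k-1}\}}=(n-k+1)n^{k-2}$ into the recursion collapses the fractions and gives
\[
\#\,\Gamma_{\{j_1,\dots,j_k\}} \;=\; n\cdot\frac{n-k}{\,n-k+1\,}\cdot(n-k+1)n^{k-2} \;=\; (n-k)n^{k-1}.
\]

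I expect the main obstacle to be the two structural justifications feeding the recursion: first, the claim that a reattached edge $e_{j_k v}$ avoids a loop precisely for $v$ outside $T_{j_k}(\gamma')$, which must be argued carefully from the orientation of the forest; and second, the symmetry claim that the summed tree-size contribution is independent of the chosen root, which relies on $\Gamma_{\{j_1,\dots,j_{k-1}\}}$ being closed under permutations of its roots. Once both are in place the arithmetic is immediate. As a consistency check, at $k=n-1$ the formula returns $\#\,\Gamma_j=n^{n-2}$, the classical Cayley count of spanning trees rooted at a fixed vertex, and at $k=n$ it returns $0$, matching the impossibility of an acyclic graph in which every vertex has an outgoing edge.
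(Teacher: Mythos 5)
Your proof is correct, but it takes a genuinely different route from the paper's. The paper derives a full-history recursion, $b_k^n = (n-k)\sum_{j=1}^k b_{k-j}^n\,\frac{(k-1)!}{(k-j)!}$, by classifying case by case whether each marked vertex sends its edge to an unmarked vertex or to another marked one, and then verifies the closed form $(n-k)n^{k-1}$ by a separate induction with a telescoping computation; the derivation of that recursion is itself only sketched (``stepping further, we conclude\dots''). You instead fiber over the out-edge of the single newly marked vertex $j_k$, which produces the first-order recursion $b_k^n = n\,\frac{n-k}{n-k+1}\,b_{k-1}^n$; the price is an extra double-counting lemma, namely that $\sum_{\gamma'}|T_\rho(\gamma')|$ is the same for every root $\rho$ (by invariance of $\Gamma_{\{j_1,\dots,j_{k-1}\}}$ under permutations of its unmarked vertices), so that the average tree size attached to any root is $n/(n-k+1)$. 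Both of your structural ingredients are sound: the criterion that attaching $e_{j_k v}$ creates a loop exactly when $v$ lies in the tree rooted at $j_k$ follows from the out-degree-at-most-one structure, and the partition of the $n$ vertices into the trees of the $n-k+1$ roots gives the total $n$ per graph. What your approach buys is a recursion involving only the previous term and an explicit use of the forest picture (roots = unmarked vertices), making the combinatorics cleaner and arguably more rigorous than the paper's enumeration; what the paper's approach buys is that it needs no symmetry or averaging argument, only direct edge-by-edge counting. Your consistency checks ($k=n-1$ recovering Cayley's count $n^{n-2}$, and $k=n$ giving $0$) are also correct and match the paper's corollary.
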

\begin{proof}
Let $b_k^n := \#~ \Gamma_{\{j_1, \dots, j_k\}}$. 
The proof is done in two steps. Firstly, we derive a recursive formula for $b_k^n$. Secondly, we show inductively the claimed expression.
\newline
1.Step:
We define $b_0^n=1$.
 We are going to prove that $b_k^n = (n-k)n^{k-1}$.
 To compute $b_1^n$, fix one vertex, say $j_1=1$. Hence, there are $n-1$ possible edges from $j_1$. So $b_1^n=n-1$. Let us compute $b_2^n$. Fix again two vertices say 1 and 2. To define an edge from 1, we have two choices: we can go to some of the $n-2$ not marked vertices or to 2. Choosing an edge to one of the vertices that are not marked, we get for an edge from 2 then $b_1^n$ possibilities. Hence in this case $(n-2)b_1^n$. Choosing the edge to 2, we have $n-2$ options for an edge from 2. So, $b_2^n = (n-2) (b_1^n + b_0^n)$ in total. Let us compute $b_3^n$. Fix again three vertices say 1, 2 and 3. To define an edge from 1, we have two choices: we to some of the $n-3$ not marked vertices or to a marked vertex (1 or 2). 
 Choosing an edge to one not marked vertex, we get $(n-3)$ times $b_2^n$ (for the two remaining vertices) possibilities. Hence in this case $(n-2)b_2^n$. Choosing the edge to a marked vertex, we have 2 times options since we have freedom to go to 2 or to 3. Let us go to say 2. The edge from 2 can not return to 1. It can go to a 
 marked vertex, which leads to $(n-3) b_1^n$ options for an edge starting from 3. Or, it can go to a not marked vertex, which leads to $(n-3)b_0^n$ options for an edge starting from 3. Hence, $b_3^n = (n-3) (b_2^n + 2(b_1^n + b_0^n))$ in total. Stepping further, we conclude the following recursion formula for $b_k^n$: 
 \begin{align*}
  b_k^n &= (n-k)\left[b_{k-1}^n + (k-1)\left(b_{k-2}^n + (k-2) \left(b_{k-3}^n + \dots +2\left(b_1^n + b_0^n\right)\right)\dots \right)\right] = \\
  &= (n-k) \sum_{j=1}^k b_{k-j}^n \frac{(k-1)!}{(k-j)!}
 \end{align*}
2. Step: We prove inductively that $b_k^n = (n-k)n^{k-1}$. For $k=0$, we get by definition $b_0^n = 1$. We already computed $b_1^n = n-1$ and $b_2^n = (n-2)n$. Let us assume that $b_{k-j}^n = (n-k+j)n^{k-j-1}$ holds for any $j = 1, \dots, k$. We want to prove the claim for $j=0$. In particular it suffices to show that
 \begin{align*}
  \sum_{j=1}^k (n-k+j) n^{k-j-1}\frac{(k-1)!}{(k-j)!}  = n^{k-1}. 
 \end{align*}
 The left-hand side is
 \begin{align*}
   &\frac 1 n\sum_{j=1}^k (n-k+j) n^{k-j}\frac{(k-1)!}{(k-j)!} = \frac 1 n\sum_{l=0}^{k-1} (n-l) n^l\frac{(k-1)!}{l!} =\\
  =&\frac {(k-1)!}{n}\left(n + \sum_{l=1}^{k-1} \frac{n^{l+1}}{l!} - \sum_{l=1}^{k-1} \frac{n^l}{(l-1)!} \right) = \frac {(k-1)!}{n}\frac{n^k}{(k-1)!} = n^{k-1}.
 \end{align*}
This proves the claim.
\end{proof}
\begin{cor}
 It holds $\# ~\Gamma_{j} = n^{n-2}$ and hence every entry of $w$ consists of $n^{n-2}$ addends with $n-1$ factors each.
\end{cor}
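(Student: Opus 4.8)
The plan is to obtain this statement as a direct specialization of the preceding Proposition, since the substantive combinatorial work---the recursion and induction establishing $\#\,\Gamma_{\{j_1,\dots,j_k\}} = (n-k)n^{k-1}$---has already been carried out. First I would invoke the identification recorded in the text, namely $\Gamma_j = \Gamma_{\{1,\dots,n\}\setminus\{j\}}$: a directed tree rooted at $j$ is exactly a directed acyclic graph in which every vertex other than $j$ has precisely one outgoing edge and $j$ has none, which is exactly condition b) defining $\Gamma_{\{j_1,\dots,j_k\}}$ when the marked set $\{j_1,\dots,j_k\}$ is the complement of $\{j\}$. Marking all $n-1$ vertices distinct from $j$ thus means setting $k = n-1$ in the formula.

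Substituting $k = n-1$ into $(n-k)n^{k-1}$ yields
\begin{align*}
 \#\,\Gamma_j = \#\,\Gamma_{\{1,\dots,n\}\setminus\{j\}} = \bigl(n-(n-1)\bigr)\,n^{(n-1)-1} = 1\cdot n^{n-2} = n^{n-2},
\end{align*}
which is the claimed cardinality. For the second assertion I would return to formula (\ref{StationMeasure}), where $w_j = \sum_{\gamma\in\Gamma_j}\prod_{e_{ik}\in E(\gamma)} m_{ik}$; the number of summands is precisely $\#\,\Gamma_j = n^{n-2}$. It then remains only to count the factors in each product, and here I would recall the observation already made in Section \ref{SectionRootedTrees}: every $\gamma\in\Gamma_j$ has exactly $n-1$ edges, since each of the $n-1$ vertices different from $j$ contributes exactly one outgoing edge while $j$ contributes none. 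Hence each product $\prod_{e_{ik}\in E(\gamma)} m_{ik}$ has exactly $n-1$ factors.

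There is no genuine obstacle in this corollary; the only points requiring care are the bookkeeping of the index substitution $k = n-1$ and the reminder that the factor count $n-1$ is read off from the tree structure (one outgoing edge per non-root vertex) rather than from the cardinality formula itself.
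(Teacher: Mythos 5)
Your proof is correct and takes essentially the same route as the paper: the corollary is precisely the specialization $k=n-1$ of the preceding Proposition via the identification $\Gamma_j=\Gamma_{\{1,\dots,n\}\setminus\{j\}}$, with the $n-1$ factors per addend read off from the fact (noted in Section \ref{SectionRootedTrees}) that every rooted tree has exactly $n-1$ edges. The paper leaves the corollary without a written proof, treating it as exactly this immediate substitution.
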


\section{Symmetric case of detailed balance}\label{SectionDetailedBalance}
A special situation occurs if the Markov process satisfies a symmetry condition. We may assume that the reaction network is connected, otherwise each separated region can be treated independently. A Markov process is detailed balanced with respect to its invariant measure $w$, if by definition it is weakly reversible, i.e. whenever $m_{ij}\neq 0$ then also $m_{ji}\neq 0$, and, moreover, it holds $m_{ij}w_j = m_{ji}w_i$. This means that the stochastic matrix $M$ is symmetric in $L^2(w)$, the $L^2$ over the invariant measure $w>0$. The first property of weak reversibility implies that the invariant measure is unique. The second property, as we will see, simplifies the formula for the invariant measure hugely.

Firstly, we have the following.
\begin{lem}\label{LemOrientationDetailedBalance}
Let the stochastic matrix $M$ be detailed balanced w.r.t. the invariant measure $w>0$. Let $j_1\mapsto j_2 \mapsto \dots \mapsto j_k \mapsto j_1$ be a loop in the graph of $M$.  Then
\begin{align}
m_{j_1j_2} m_{j_2j_3} \cdots m_{j_kj_1} = m_{j_1j_k} m_{j_kj_{k-1}} \cdots m_{j_2j_1}\label{eqLoops}.
\end{align} 
\end{lem}
\begin{proof}
It holds for any $i= 1, \dots, k$ that $m_{j_ij_{i+1}}w_{j_{i+1}} = m_{j_{i+1}j_i}w_{j_{i}}$, where we use the notation that $j_{k+1}=j_1$. Taking the product of this equation for any $i= 1, \dots, k$ and dividing by $\prod_{i=1}^k w_{j_i}>0$ yields the claim. 
\end{proof}
\begin{rem}
The above relation (\ref{eqLoops}) is indeed equivalent to $M$ being detailed balanced.
\end{rem}
To simplify the formula for the invariant measure of a stochastic matrix, we need the definition of a \textit{undirected tree}. 

An  undirected graph $\gamma=(V,E)$ consists of vertices $V$ and edges $E$ where the edges $e\in E$ do not have any orientation, i.e. there is no difference between the edge going from $i$ to $j$ or from $j$ to $i$. Paths and loops can be defined as in the case of directed case before.
\begin{defi}
Let $\gamma=(V,E)$ be a undirected graph. A \textit{(undirected) tree} in $\gamma$ is a subgraph $t=(V, E')$, $E'\subset E$ containing all vertices $V$ connected by edges $e\in E'$ such that there are no loops in $t$. 
\end{defi}
Now observe the following. Pick any tree $t$ and any vertex $k$. Then the tree $t$ defines canonically a unique spanning tree rooted at $k$ by orienting each edge in $t$ into the direction of $k$. The spanning tree is easily constructed inductively by looking at the vertices on the tree which have the distance $1$, $2$ and so on to the vertex $k$. Let us denote this spanning tree by $t_k$. Now, we define $w^t = (w_k^t)_{k}$ by
\begin{align}\label{eqStationaryMeasureDetailedBalance}
w_k^t = \prod_{e_{ij}\in t_k} m_{ij}.
\end{align}
Observe that the only difference between the spanning trees defined by $i$ and $j$ is the orientation of the path between $i$ and $j$:

\begin{center}
\begin{minipage}[t]{12cm}
\unitlength=1.4cm

%\begin{picture}(3.3, 2.3)
%\linethickness{0.2mm}

%\put(0.6,1.0){\circle*{0.1}} 
%\put(0.1,2.0){\circle*{0.1}} 
%\put(0.1,0.0){\circle*{0.1}} 
%\put(1.6,1.0){\circle*{0.1}} 
%\put(2.1,2.0){\circle*{0.1}} 
%\put(2.1,0.0){\circle*{0.1}} 
%\put(2.6,1.0){\circle*{0.1}} 

%\put(0.1,0.0){\line(1,2){0.5}} 
%\put(0.1,2.0){\line(1,-2){0.5}} 
%\put(0.6,1.0){\line(1,0){1.0}} 
%\put(1.6,1.0){\line(1,2){0.5}} 
%\put(1.6,1.0){\line(1,-2){0.5}} 
%\put(2.1,0.0){\line(1,2){0.5}} 

%\put(0.62, 1.2){\makebox(0.0,0.0){$i$}}
%\put(2.12, 0.24){\makebox(0.0,0.0){$j$}}

%\end{picture}~
\begin{picture}(3.3, 2.3)
\linethickness{0.2mm}

\put(0.6,1.0){\circle*{0.1}} 
\put(0.1,2.0){\circle*{0.1}} 
\put(0.1,0.0){\circle*{0.1}} 
\put(1.6,1.0){\circle*{0.1}} 
\put(2.1,2.0){\circle*{0.1}} 
\put(2.1,0.0){\circle*{0.1}} 
\put(2.6,1.0){\circle*{0.1}} 

\put(0.1,0.0){\vector(1,2){0.45}} 
\put(0.1,2.0){\vector(1,-2){0.45}} 
\put(1.6,1.0){\vector(-1,0){0.9}} 
\put(2.1,2.0){\vector(-1,-2){0.45}} 
\put(2.1,0.0){\vector(-1,2){0.45}} 
\put(2.6,1.0){\vector(-1,-2){0.45}} 

\put(0.62, 1.2){\makebox(0.0,0.0){$i$}}
\put(2.12, 0.24){\makebox(0.0,0.0){$j$}}

\end{picture}\hspace{3cm}
\begin{picture}(3.3, 2.3)
\linethickness{0.2mm}

\put(0.6,1.0){\circle*{0.1}} 
\put(0.1,2.0){\circle*{0.1}} 
\put(0.1,0.0){\circle*{0.1}} 
\put(1.6,1.0){\circle*{0.1}} 
\put(2.1,2.0){\circle*{0.1}} 
\put(2.1,0.0){\circle*{0.1}} 
\put(2.6,1.0){\circle*{0.1}} 

\put(0.1,0.0){\vector(1,2){0.45}} 
\put(0.1,2.0){\vector(1,-2){0.45}} 
\put(0.6,1.0){\vector(1,0){0.9}} 

\put(2.1,2.0){\vector(-1,-2){0.45}} 
\put(1.6,1.0){\vector(1,-2){0.45}} 

\put(2.6,1.0){\vector(-1,-2){0.45}} 

\put(0.62, 1.2){\makebox(0.0,0.0){$i$}}
\put(2.12, 0.24){\makebox(0.0,0.0){$j$}}

\end{picture}
\end{minipage}
\end{center}
\vspace{2mm}

The next aim is to show that $w^t$ is indeed the (unique) invariant measure of the $M$, and moreover, that $w_k^t$ and also $w^t$ do not depend on the undirected tree $t$ chosen before, i.e. any fixed tree $t$ defines the same invariant measure up to a scaling factor.

\begin{thm}
Let the stochastic matrix $M$ be detailed balanced w.r.t. the invariant measure $w>0$. Take any tree $t$, then $w^t$, defined by (\ref{eqStationaryMeasureDetailedBalance}) is (up to normalization) the invariant measure of $M$. In fact, different trees just correspond to different normalization factors.
\end{thm}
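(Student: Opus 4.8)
The plan is to verify that $w^t$ obeys the detailed balance relations $w_i^t\,m_{ij}=w_j^t\,m_{ji}$ across \emph{every} edge of the reaction graph; invariance then follows in one line, and independence of the tree comes for free from the uniqueness of the invariant measure of a weakly reversible chain.

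First I would compute the ratio $w_j^t/w_i^t$ for arbitrary vertices $i,j$. By the observation made just before the theorem, the rooted trees $t_i$ and $t_j$ carry identical oriented edges off the (unique) $t$-path joining $i$ and $j$, and exactly the opposite orientation along that path. Writing the path as $i=v_0,v_1,\dots,v_\ell=j$ and setting $P=\prod_{s=1}^\ell m_{v_{s-1}v_s}$ and $Q=\prod_{s=1}^\ell m_{v_s v_{s-1}}$ (the products of transition entries read along the path from $i$ to $j$ and from $j$ to $i$), the common off-path factor cancels and I get $w_j^t/w_i^t=P/Q$.

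Next I would check detailed balance on each graph edge $\{i,j\}$, i.e. each pair with $m_{ij},m_{ji}>0$. If $\{i,j\}\in t$, the path is the single edge and $P/Q=m_{ij}/m_{ji}$ at once. If $\{i,j\}\notin t$, adjoining it to the $t$-path from $i$ to $j$ closes a directed loop, and Lemma \ref{LemOrientationDetailedBalance} applied to this loop gives $P\,m_{ji}=Q\,m_{ij}$, hence again $P/Q=m_{ij}/m_{ji}$. In both cases $w_j^t/w_i^t=m_{ij}/m_{ji}$, that is $w_i^t\,m_{ij}=w_j^t\,m_{ji}$ (and this holds trivially when $m_{ij}=m_{ji}=0$). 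I expect this passage to the non-tree edges to be the only real obstacle: the transparent ``reverse one path'' picture no longer decides the relation, and one genuinely needs the loop identity of Lemma \ref{LemOrientationDetailedBalance}.

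With detailed balance available for all ordered pairs, summing $w_i^t\,m_{ij}=w_j^t\,m_{ji}$ over $i$ and using $\sum_i m_{ji}=1$ gives $\sum_i w_i^t\,m_{ij}=w_j^t$, i.e. $(w^t)^TM=(w^t)^T$; moreover $w^t>0$ since each factor in (\ref{eqStationaryMeasureDetailedBalance}) is positive. Finally, weak reversibility turns the connected reaction graph into a strongly connected one, so $M$ is irreducible and its invariant measure is unique up to a positive scalar. As every tree $t$ produces such a positive invariant vector $w^t$, all of them coincide up to their normalizing constants, which is precisely the claimed tree-independence.
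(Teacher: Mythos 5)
Your proof is correct. For the invariance half it takes essentially the paper's own route: the paper's Step~1 likewise establishes the edge-wise detailed balance relation $m_{kj}w_k^t = m_{jk}w_j^t$, observing that $m_{kj}w_k^t$ and $m_{jk}w_j^t$ correspond to graphs with exactly one cycle which agree off that cycle and carry opposite orientations on it, and then invokes Lemma~\ref{LemOrientationDetailedBalance}; your explicit ratio computation $w_j^t/w_i^t = P/Q$ and the case split into tree edges versus non-tree edges is a more detailed write-up of exactly this step, and you correctly identify that only the non-tree edges genuinely require the loop identity. Where you depart from the paper is the tree-independence claim. The paper proves it combinatorially: it writes $w^1_i/w^1_j$ and $w^2_i/w^2_j$ as ratios of oppositely oriented path products in $t_1$ and $t_2$, and applies Lemma~\ref{LemOrientationDetailedBalance} once more to the closed walk formed by the $t_1$-path from $j$ to $i$ and the $t_2$-path from $i$ to $j$. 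You instead deduce independence from uniqueness: weak reversibility plus connectedness makes $M$ irreducible, so by Frobenius--Perron any two positive invariant vectors are proportional, and each tree produces one. Your argument is shorter and perfectly legitimate --- the paper itself records this uniqueness consequence in the text just before the theorem --- but it buys brevity at the cost of the self-contained combinatorial character that is the point of the note: the paper's second step stays entirely inside the tree calculus and never needs spectral theory, whereas yours leans on the Perron--Frobenius machinery that the combinatorial approach is designed to bypass. Both routes are sound; which one prefers depends on whether one wants the proof to remain purely graph-theoretic.
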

\begin{proof} 1. Step: We show that formula (\ref{eqStationaryMeasureDetailedBalance}) defines an invariant measure.
Let us fix one tree $t$. We want to show that for any $k=1,\dots, n$ it holds
\begin{align*}
 \sum_{j\neq k} m_{kj} w_k^t = \sum_{j\neq k}m_{jk}w_j^t.
\end{align*}
To be more precise, we show that even $m_{kj}w_k^t = m_{jk}w_j^t$ holds for any $j\neq k$. Similarly to the unsymmetric case before (Lemma \ref{LemmaExactlyOneLoop}), the product $m_{kj}w_k^t$ defines a subgraph with exactly one cycle, which passes the vertices $k$ and $j$. The graph defined by $m_{jk}w_j^t$ has exactly the same structure apart from the orientation of the cycle.  But Lemma \ref{LemOrientationDetailedBalance} provides that these two products are equal, which proves the claim.

2. Step: Now, we show that formula (\ref{eqStationaryMeasureDetailedBalance}) is infact independent of the chosen tree $t$, in the sense that for any two tree $t_1$ and $t_2$ the two invariant measure are proportional. Let us take two arbitrary trees $t_1$ and $t_2$ and the associated invariant measures $w^1$ and $w^2$. Take $i\neq j$. Then the claim is equivalent to $\frac{w^1_i}{ w^1_j} = \frac{w^2_i}{ w^2_j}$.% or  $w^1_i w^2_j = w^1_j w^2_i$.

Let us look at $w^1_i$ and $w^1_j$. Their only difference is the orientation of a path in $t_1$ connecting $i$ and $j$, i.e. $\frac{w^1_i}{w^1_j} = \frac{\mathrm{Path~in~}t^1: j\mapsto i}{\mathrm {Path~in~}t^1: i\mapsto j}$. The same relation holds for $w^2_i$ and $w^2_j$ with respect to tree $t_2$, i.e. $\frac{w^2_i}{w^2_j} = \frac{\mathrm{Path~in~}t^2: j\mapsto i}{\mathrm {Path~in~}t^2: i\mapsto j}$. So the claim is equivalent to
\begin{align*}
\frac{\mathrm{Path~in~}t^1: j\mapsto i}{\mathrm{Path~in~}t^1: i\mapsto j} =  \frac{\mathrm{Path~in~}t^2: j\mapsto i}{\mathrm {Path~in~}t^2: i\mapsto j} 
\end{align*}
or, in other words, equivalent to
\begin{align*}
(\mathrm{Path~in~}t^1: j\mapsto i )\cdot (\mathrm {Path~in~}t^2: i\mapsto j) = (\mathrm{Path~in~}t^2: j\mapsto i )\cdot (\mathrm{Path~in~}t^1: i\mapsto j).
\end{align*}
Both sides define a cycle consisting of the same edges but with different orientation. Lemma \ref{LemOrientationDetailedBalance} provides again that both terms are indeed equal.
\end{proof}
\begin{exam}
The scaling factor for different trees is not 1 in general. Consider a stochastic matrix between three states with detailed balance, i.e. $abc=def$. 

\begin{center}
\begin{minipage}[t]{12cm}
\unitlength=1.6cm

\begin{picture}(2.3, 1.5)
\linethickness{0.2mm}

\put(0.6,1.2){\circle*{0.1}} 
\put(0.1,0.2){\circle*{0.1}} 
\put(1.1,0.2){\circle*{0.1}} 

\put(0.08,0.2){\vector(1,2){0.45}} 
\put(0.62,1.2){\vector(-1,-2){0.45}} 
\put(1.08,0.2){\vector(-1,2){0.45}} 
\put(0.62,1.2){\vector(1,-2){0.45}} 
\put(1.1,0.18){\vector(-1,0){0.9}} 
\put(0.1,0.22){\vector(1,0){0.9}} 

\put(0.0, 0.2){\makebox(0.0,0.0){$1$}}
\put(1.2, 0.2){\makebox(0.0,0.0){$2$}}
\put(0.6, 1.35){\makebox(0.0,0.0){$3$}}

\put(0.25, 0.7){\makebox(0.0,0.0){$a$}}
\put(0.45, 0.7){\makebox(0.0,0.0){$d$}}
\put(0.72, 0.6){\makebox(0.0,0.0){$f$}}
\put(1.0, 0.7){\makebox(0.0,0.0){$b$}}

\put(0.6, 0.1){\makebox(0.0,0.0){$c$}}
\put(0.6, 0.3){\makebox(0.0,0.0){$e$}}

\end{picture}~
\begin{picture}(2.3, 1.3)
\linethickness{0.2mm}

\put(0.6,1.2){\circle*{0.1}} 
\put(0.1,0.2){\circle*{0.1}} 
\put(1.1,0.2){\circle*{0.1}} 

\put(0.6,1.2){\line(1,-2){0.5}} 
\put(1.1,0.2){\line(-1,0){1.0}} 

\put(0.0, 0.2){\makebox(0.0,0.0){$1$}}
\put(1.2, 0.2){\makebox(0.0,0.0){$2$}}
\put(0.6, 1.35){\makebox(0.0,0.0){$3$}}

\end{picture}~
\begin{picture}(2.3, 1.3)
\linethickness{0.2mm}

\put(0.6,1.2){\circle*{0.1}} 
\put(0.1,0.2){\circle*{0.1}} 
\put(1.1,0.2){\circle*{0.1}} 

\put(0.1,0.2){\line(1,2){0.5}} 
\put(0.6,1.2){\line(1,-2){0.5}} 

\put(0.0, 0.2){\makebox(0.0,0.0){$1$}}
\put(1.2, 0.2){\makebox(0.0,0.0){$2$}}
\put(0.6, 1.35){\makebox(0.0,0.0){$3$}}

\end{picture}
\end{minipage}

\end{center}
Let us fix two trees: $1\mapsto 2 \mapsto 3$ and $1\mapsto 3\mapsto 2$. Then formula (\ref{eqStationaryMeasureDetailedBalance}) yields $w^1 = (bc,be,ef)^T$ and $w^2 = (df,ab,af)$ and the proportionality factor is $e/a$.
\end{exam}

%-------------------------------------------------------------------------
\end{document}